\newtheorem{theorem}{Theorem}[section]
\newtheorem{proposition}[theorem]{Proposition}
\newtheorem{lemma}[theorem]{Lemma}
\newtheorem{conjecture}[theorem]{Conjecture}
\newtheorem{problem}[theorem]{Problem}
\theoremstyle{remark}
\newtheorem{remark}[theorem]{Remark}
\theoremstyle{definition}
\newtheorem{convention}[theorem]{Convention}
\newcommand{\R}{\Bbb R}
\newcommand{\G}{G(L)}
\newcommand{\C}{\Bbb C}
\newcommand{\Z}{\Bbb Z}
\newcommand{\D}{\Delta}
\newcommand{\tr}{{\mathrm{tr}\,}}
\newcommand{\la}{\langle}
\newcommand{\ra}{\rangle}
\numberwithin{equation}{section}
\newcommand{\disp}{\displaystyle}
\begin{document}

\title[Twisted Alexander polynomials of hyperbolic links]
{Twisted Alexander polynomials of hyperbolic links}

\author{Takayuki Morifuji and Anh T. Tran}

\begin{abstract}
In this paper 
we apply the twisted Alexander polynomial to study 
the fibering and genus detecting problems for oriented links. 
In particular 
we generalize a conjecture of Dunfield, Friedl and Jackson 
on the  torsion polynomial of hyperbolic knots
to hyperbolic links, and confirm it for an infinite family of hyperbolic 
$2$-bridge links. Moreover we consider a similar problem for 
parabolic representations of $2$-bridge link groups. 
\end{abstract}

\thanks{2010 {\it Mathematics Subject Classification}.
Primary 57M27, Secondary 57M05, 57M25.}

\thanks{{\it Key words and phrases.\/}
Twisted Alexander polynomial, character variety, hyperbolic link, double twist link, 
parabolic representation}

\address{Department of Mathematics,
Hiyoshi Campus, Keio University, 
Yokohama 223-8521, Japan}
\email{morifuji@z8.keio.jp}

\address{Department of Mathematical Sciences, The University of Texas at Dallas, 
Richardson, TX 75080, USA}
\email{att140830@utdallas.edu}

\maketitle

\section{Introduction}\label{sec:intro}

The twisted Alexander polynomial, a generalization of the classical Alexander polynomial 
\cite{Alexander28-1}, is defined for a pair consisting of  a group and 
its representation into a linear group. This invariant was first introduced by Lin \cite{Lin01-1} for knots 
in the $3$-sphere $S^3$ and by Wada \cite{Wada94-1} for 
finitely presentable groups which include the link groups. 
In recent years 
a theory of twisted Alexander polynomials has been rapidly developed and 
contributed to solving various important problems in low-dimensional topology, 
especially, in the theory of knots and links. 
However 
there seems to be still remained to study this invariant. 
As for recent developments on twisted Alexander polynomials and their applications, 
we refer to the survey papers \cite{FV10-1}, \cite{Morifuji15-1} 
and the references therein. 

In \cite{DFJ12-1}, based on huge experimental calculations, 
Dunfield, Friedl and Jackson conjectured that for a hyperbolic knot $K$ in $S^3$, 
i.e. the complement $S^3\setminus K$ has a complete hyperbolic structure of finite volume, 
the twisted Alexander polynomial associated to a lift of the holonomy representation detects the genus and 
fiberedness of $K$. At the present time, 
the conjecture has been confirmed for all hyperbolic knots with at most $15$ crossings \cite{DFJ12-1}, 
a certain infinite family of hyperbolic $2$-bridge knots (see \cite{Morifuji12-1}, \cite{MT}). 
Moreover, 
Agol and Dunfield showed in \cite{AD15-1} that the twisted Alexander polynomial detects the genus 
for a large class of hyperbolic knots, which includes many knots whose ordinary Alexander polynomial is trivial. 

The purpose of this paper 
is to apply the twisted Alexander polynomial to study 
the fibering and genus detecting problems for oriented links. 
In particular, 
we will generalize the conjecture of Dunfield, Friedl and Jackson to a hyperbolic link $L$ in $S^3$. 
In fact, we conjecture that the twisted Alexander polynomial associated 
to a lift of the holonomy representation, 
say $\rho_0:\pi_1(S^3\setminus L)\to SL(2,\C)$, 
detects the Thurston norm \cite{Thurston86-1} and fiberedness of 
an \textit{oriented}\, hyperbolic link, 
and show that the conjecture holds true for an infinite family of  hyperbolic $2$-bridge links. 
Actually we will show in Section~\ref{sec:examples} the following theorem. 

\begin{theorem}[Theorem~\ref{hyp_thm}]\label{thm:1.1}
For the double twist link $L$ as in Figure~2, 
the twisted Alexander polynomial $\D_{L,\rho_0}(t)$ associated to $\rho_0$ 
determines the Thurston norm. Moreover $L$ is fibered if and only if $\D_{L,\rho_0}(t)$ 
is monic. 
\end{theorem}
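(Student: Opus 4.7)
The plan is to exploit the two-bridge structure of $L$. First, I would write down the standard two-generator, one-relator Wirtinger-type presentation of $\pi_1(S^3\setminus L)$ with meridian generators $a,b$ coming from the two-bridge decomposition; the single relation $wa=bw$ is controlled by the continued fraction expansion of the two-bridge fraction, and in the double twist case this word $w$ takes an especially clean form depending on the twist parameters. On the cohomology side, I would determine an explicit Seifert surface (e.g.\ the standard checkerboard/plumbing surface) for a chosen orientation of $L$ and use it to identify the Thurston norm $\|\cdot\|_T$ of the generator of $H^1(S^3\setminus L;\Z)$; for double twist links this is a straightforward count in terms of the twist parameters.

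Next, I would parametrize the $SL(2,\C)$-character variety near $\rho_0$. After conjugation, $\rho(a)$ and $\rho(b)$ can be put into a normal form depending on a single parameter, and the relation $\rho(wa)=\rho(bw)$ reduces to the vanishing of a single (Riley-type) polynomial. The holonomy lift $\rho_0$ corresponds to the distinguished root of this polynomial cut out by the parabolic/complete structure condition. With this in hand, I would compute $\D_{L,\rho_0}(t)$ using Wada's formula, that is, apply Fox calculus to the two-generator presentation, tensor up by $\rho_0\otimes t^{\alpha}$, and evaluate the resulting $2\times 2$ block determinant divided by the Wada normalization factor. For the double twist family this computation reduces to a recursive identity in $t$ whose coefficients are polynomials in the entries of $\rho_0(a)$ and $\rho_0(b)$.

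Having an explicit formula for $\D_{L,\rho_0}(t)$, the Thurston-norm statement would follow by showing that $\deg \D_{L,\rho_0}(t)$ equals $2\|\cdot\|_T$ plus the universal boundary correction for the twisted case (this is where the degree-of-Fox-derivative calculation plays its role, and where the two-bridge combinatorics ensure no unwanted cancellation between numerator and denominator). For the fiberedness statement, I would use the fact that a two-bridge link $L$ is fibered if and only if its continued fraction expansion consists entirely of $\pm 2$'s (equivalently, its ordinary Alexander polynomial is monic), and then directly inspect the top and bottom coefficients of the explicit formula for $\D_{L,\rho_0}(t)$: these coefficients can be expressed as specific evaluations at $\rho_0$ whose absolute value equals $1$ exactly in the fibered sub-family.

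The main obstacle I expect is showing that no unexpected cancellation occurs in the Fox-calculus expression at the holonomy parameter, so that the true degree of $\D_{L,\rho_0}(t)$ matches the predicted $2\|\cdot\|_T$. Because $\rho_0$ is a \emph{specific} root of the Riley-type polynomial, one must rule out that the leading or trailing coefficient of $\D_{L,\rho_0}(t)$ happens to vanish there. The cleanest way to handle this is to view the Fox-calculus output as a polynomial in both $t$ and the trace parameter and to check that the coefficients in question are, as polynomials in the trace parameter, coprime to the Riley polynomial; this reduces the analysis to a finite, transparent check on the double twist family.
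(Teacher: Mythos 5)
Your computational skeleton is essentially the one the paper uses: the one-relator presentation $G(L)=\la a,b\mid awa^{-1}=w\ra$ of the double twist link (note that for a two-component $2$-bridge link the relator is $awa^{-1}w^{-1}$, not the knot-type relation $wa=bw$ you wrote), Fox calculus reducing $\D_{L,\rho}(t)$ to $\det\Phi(\partial w/\partial b)$, explicit top and bottom coefficients via Chebyshev identities, and the Riley/Petersen--Tran description of the character variety. The genuine gap is in the one step that carries the mathematical content: certifying that at the holonomy character the leading coefficient neither vanishes (for the norm statement) nor equals one when $L$ is non-fibered (for the monicity statement). Your proposed certificate --- coprimality of the leading coefficient, viewed as a polynomial in the trace parameter, with the Riley polynomial --- is not available as a general principle and is precisely what can fail: Theorem~\ref{thm:main} of the paper exhibits hyperbolic $2$-bridge links and parabolic representations (i.e.\ roots of the Riley polynomial) at which the leading coefficient \emph{does} vanish, so the ``finite, transparent check'' cannot simply be waved through and, even where it succeeds, it is not the paper's mechanism. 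What the paper actually does is describe, via Propositions~\ref{pro:4.1}--\ref{pro:4.4} and the factorizations of $T_m(z)-2$ and $T_m(z)-z$ in Lemmas~\ref{chev3} and~\ref{chev4}, the exceptional loci $Y$, $Y'$, $Y_{j,k}$, $Y'_{j,k}$, $Z_l$, $Z'_l$, $Z_{j,k}$, $Z'_{j,k}$ in $\{R(x,y,z)=0\}$ where the coefficient $\frac{T_m(z)-2}{z-2}$ (or its analogue) vanishes or equals $1$, and then prove (Lemmas~\ref{Z'_l} and~\ref{Y'}) that on each such locus the parabolic condition $x=y=2$ forces $z\in\R$; since $L$ is hyperbolic, $\chi_{\rho_0}=(2,2,z_0)$ with $z_0\notin\R$, so $\rho_0$ lies on none of them. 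This reality-versus-nonreality argument is the engine of the proof and is absent from your outline. Two further omissions: the genus and fiberedness of an oriented link depend on the orientation, so the argument must be run for both essentially distinct orientations of the two components --- under the second orientation the leading coefficient becomes $\frac{T_n(v)-2}{v-2}$ with $v$ a nontrivial polynomial in $x,y,z$, and ruling out the bad loci is exactly the content of Lemmas~\ref{Z'_l} and~\ref{Y'}, not a transparent check; and for the direction ``monic implies fibered'' you must control the locus where the leading coefficient equals one, which your remark about coefficients of absolute value one does not address (monic here means the top coefficient is one).
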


As is well-known, 
these topological properties of a $2$-bridge link are detected by 
the reduced Alexander polynomial (see \cite{Crowell59-1}, \cite{Murasugi58-1}, \cite{Murasugi63-1}). 
However there seems to be no a priori reason 
that the same must be true for the twisted Alexander polynomial. 

Since 
a lift of the holonomy representation of a hyperbolic link $L$ is one of the \textit{parabolic}\, representations 
(namely, it is a nonabelian $SL(2,\C)$-representation and the traces of the images of all the meridians of $L$ are two), 
it is natural to consider the following problem: 
\textit{For an oriented hyperbolic link $L$ and its parabolic representation 
$\rho:\pi_1(S^3\setminus L)\to SL(2,\C)$, does the twisted Alexander polynomial associated to 
$\rho$ determine the Thurston norm and fiberedness of $L$?}\, 
In this paper, 
we give a partial answer to this question in the case of a $2$-bridge link. 
More precisely we show that not all parabolic representations detect the genus 
(in this case, the Thurston norm is equivalent to the genus) of a hyperbolic $2$-bridge link. 

This paper is organized as follows. In Section~\ref{sec:preliminaries}, 
we briefly review some basic materials for the $SL(2,\C)$-character variety of a finitely generated group 
and the twisted Alexander polynomial of an oriented link associated to a two-dimensional linear representation. 
In Section~\ref{sec:hyperbolic torsion polynomials} 
we review a conjecture of Dunfield, Friedl and Jackson for hyperbolic knots
and state its generalization for oriented hyperbolic links. 
Section~\ref{sec:examples} is devoted to the calculation of the loci of the character variety which 
characterize fiberedness and the genus of a wide family of $2$-bridge links. 
This result can be regarded as a generalization of \cite{KM12-1} and \cite{KKM13-1} 
which discussed the same problem in the case of knots. 
In the last section, we give an answer to the question 
on parabolic representations mentioned above. 

\section*{Acknowledgements} 
The authors would like to thank Nathan Dunfield, Hiroshi Goda and Takahiro Kitayama for useful comments. 
The first author has been partially supported by JSPS KAKENHI Grant Number 26400096. 
The second author has been partially supported by a grant from the Simons Foundation (\#354595 to Anh Tran).

\section{Preliminaries}\label{sec:preliminaries}

In this section we give several standard definitions, 
and put a convention on the links which we will handle throughout the paper. 

\subsection{Oriented links}\label{subsec:alternating-link}

A $\mu$-component \textit{link}\, $L$ is the union of $\mu$ ordered, oriented 
and pairwise disjoint circles $L_i$ embedded in the $3$-sphere $S^3$. 
Two links $L$ and $L'$ are \textit{equivalent}\, 
if and only if 
$\mu=\mu'$ and there exists an orientation preserving homeomorphism 
$f$ of $S^3$ to itself such that $f(L_i)=L_i'$ and $f|_{L_i}$ is also 
orientation preserving for any $i$. 
A \textit{knot}\, is nothing but a $1$-component link. 

A \textit{Seifert surface} of an oriented link $L \subset S^3$ is a 
connected, oriented, compact surface $S$ embedded in $S^3$ whose boundary is $L$ such that 
the orientation on $L$ is just the induced orientation from $S$. 
The link $L$ is called \textit{fibered}\, 
if the exterior $E_L= S^3 \setminus \text{int}(N(L))$ has a structure of a surface bundle over the circle 
such that a Seifert surface of $L$ represents a fiber.
The \textit{genus}\, $g=g(L)$ of $L$ is the 
minimum of the genera of all its Seifert surfaces. 
We note that fiberedness and the genus of a link depend on a choice of 
orientations of the link. 

A link $L$ in $S^3$ is called \textit{hyperbolic}\, if the exterior 
$E_L $ has a complete hyperbolic structure of finite volume. 

Let $N$ be a compact connected orientable $3$-manifold and $\sigma\in H^1(N;\Z)$. 
The \textit{Thurston norm}\, of $\sigma$ is defined as 
$$
||\sigma||_T
=\min\{\chi_-(S)\,|\,S\subset N~ \text{properly embedded surface dual to}~\sigma\},
$$
where for a given surface $S$ with connected components $S_1\cup\cdots\cup S_k$, 
we define $\chi_-(S)=\sum_{i=1}^k\max\{-\chi(S_i),0\}$. 
Thurston showed in \cite{Thurston86-1} that this defines a seminorm on $H^1(N;\Z)$ 
and moreover on $H^1(N;\R)$. 

\begin{convention}\label{non-split}
In this paper we always consider \textit{oriented}\, links. 
In addition, we assume throughout this paper that links are \textit{non-split}. 
\end{convention}

\subsection{Character varieties}\label{subsec:character-variety}

Let $G$ be a finitely generated group.
The \textit{variety of representations} $R(G)$
of $G$ is the set of
$SL(2,\C)$-representations, i.e.
$R(G)=\mathrm{Hom}(G,SL(2,\C))$.
Since $G$ is finitely generated, $R(G)$ can be embedded in
a product $SL(2,\C)\times\cdots\times SL(2,\C)$ by mapping
each representation to the image of a generating set.
In this manner,
$R(G)$ is an affine algebraic set whose defining polynomials
are induced by the relators of a presentation of $G$.
It is known that 
this structure is independent of the choice of
presentations of $G$ up to isomorphism.

A representation $\rho:G\to SL(2,\C)$ is said to be
\textit{abelian}\/ if $\rho(G)$ is an abelian subgroup of
$SL(2,\C)$. A representation $\rho$ is called \textit{reducible}\/
if there exists a proper invariant subspace in $\C^2$
under the action of $\rho(G)$.
This is equivalent to saying that $\rho$ can be conjugated to a
representation whose image consists of upper triangular matrices. 
When $\rho$ is not reducible, it is called
\textit{irreducible}.

Given a representation $\rho\in R(G)$, its \textit{character}\/ is
the map $\chi_\rho:G\to \C$ defined by
$\chi_\rho(\gamma)=\tr(\rho(\gamma))$ for $\gamma\in G$.
We denote the set of all characters by $X(G)$.
For a given element $\gamma\in G$,
we define the map $\tau_\gamma:X(G)\to\C$ by
$\tau_\gamma(\chi)=\chi(\gamma)$.
Then
it is known that $X(G)$ is an affine algebraic set
which embeds in $\C^N$ with coordinates
$(\tau_{\gamma_1},\ldots,\tau_{\gamma_N})$
for some $\gamma_1,\ldots,\gamma_N\in G$.
This affine algebraic set is called the \textit{character variety}\/
of $G$.
We note that the set $\{\gamma_1,\ldots,\gamma_N\}$ can
be chosen to contain a generating set of $G$.
The projection $\mathfrak{t}:R(G)\to X(G)$ given by
$\mathfrak{t}(\rho)= \chi_\rho$ is surjective.

For a \textit{link group}\, $G(L)=\pi_1 (E_L)$, 
namely the fundamental group of the exterior $E_L$ of $L$ in $S^3$, 
we write $R(L)=R(G(L))$ and $X(L)=X(G(L))$ for simplicity.

\subsection{Twisted Alexander polynomials}\label{subsec:TAP}

Let 
$L=L_1\sqcup\cdots\sqcup L_\mu$ be a $\mu$-component oriented link in $S^3$. 
We choose and fix a Wirtinger presentation of $G(L)$. 
That is, 
given a regular projection of the link $L$, 
we assign to each overpass a generator $x_i$, 
and to each crossing a relator 
$x_ix_jx_k^{-1}x_j^{-1}$, as in Figure 1 (the orientation of the under crossing arc is irrelevant). 
Thus we obtain a presentation of $G(L)$ with $q$ generators and $q$ relators, 
$\la
x_1,\ldots,x_q\,|\,r_1,\ldots,r_q
\ra$. 
After some reordering of the indices, the relators $r_1,\ldots,r_q$ satisfy 
$\prod_{i=1}^q r_i^{\pm1}=1$. 
This means that any one of the relators is a consequence of the other $q-1$ 
relators. We remove one of the relators and call the resulting presentation 
$$
G(L)
=
\la
x_1,\ldots,x_q\,|\,r_1,\ldots,r_{q-1}
\ra
$$
a \textit{Wirtinger presentation}\, of $G(L)$. 

\begin{figure}\label{fig:1}
\setlength{\unitlength}{0.1mm}
\thicklines{
\begin{picture}(320,250)(-85,-20)
\put(90,-30){\line(0,1){250}}
\put(77.5,210){$\blacktriangle$}
\put(-55,95){\line(1,0){120}}
\put(110,95){\line(1,0){120}}
\put(100,-30){\large{$x_j$}}
\put(-60,110){\large{$x_k$}}
\put(220,110){\large{$x_i$}}
\end{picture}
}
\caption{A relator $x_ix_jx_{k}^{-1}x_{j}^{-1}$.}
\end{figure}

The abelianization homomorphism 
$$
\alpha_L:G(L)\to H_1(E_L;\Z)\cong\Z^{\oplus\mu}
=
\la t_1\ra\oplus\cdots\oplus\la t_\mu \ra
$$
is given by assigning to each generator $x_i$ the meridian element 
$t_k\in H_1(E_L;\Z)$ of the corresponding component $L_k$ of $L$. 
Here we denote the sum in $\Z$ multiplicatively. 
Moreover we consider the surjective homomorphism 
$p:H_1(E_L;\Z)\to \Z=\la t\ra$ defined by $t_i\mapsto t$ 
and for simplicity 
denote the composition map $p\circ\alpha_L:G(L)\to\Z$ by $\alpha$. 

In this paper 
we consider a representation of $G(L)$ into the two-dimensional special linear group 
$SL(2,\C)$, say $\rho:G(L)\to SL(2,\C)$. 
The maps $\rho$ and $\alpha$ naturally induce two ring
homomorphisms $\tilde{\rho}: {\Z}[G(L)] \rightarrow M(2,{\C})$ and
$\tilde{\alpha}:{\Z}[G(L)]\rightarrow {\Z}[t^{\pm1}]$, where
${\Z}[G(L)]$ is the group ring of $G(L)$ and $M(2,{\C})$ is the
matrix algebra of degree $2$ over ${\C}$. Then
$\tilde{\rho}\otimes\tilde{\alpha}$ defines a ring homomorphism
${\Z}[G(L)]\to M\left(2,{\C}[t^{\pm1}]\right)$. 
Let $F_q$ denote the
free group on generators $x_1,\ldots,x_q$ and
$$
\Phi:{\Z}[F_q]\to M\left(2,{\C}[t^{\pm1}]\right)
$$
the composition of the surjection
$\tilde{\phi}:{\Z}[F_q]\to{\Z}[G(L)]$
induced by the presentation of $G(L)$
and the map
$\tilde{\rho}\otimes\tilde{\alpha}:{\Z}[G(L)]\to M(2,{\C}[t^{\pm1}])$.

Let us consider the $(q-1)\times q$ matrix $M$
whose $(i,j)$-entry is the $2\times 2$ matrix
$$
\Phi\left(\frac{\partial r_i}{\partial x_j}\right)
\in M\left(2,{\C}[t^{\pm1}]\right),
$$
where
$\disp{\frac{\partial}{\partial x}}$
denotes the free differential. 
For
$1\leq j\leq q$,
let us denote by $M_j$
the $(q-1)\times(q-1)$ matrix obtained from $M$
by removing the $j$th column.
We regard $M_j$ as
a $2(q-1)\times 2(q-1)$ matrix with coefficients in
${\C}[t^{\pm1}]$. 
Then Wada's \textit{twisted Alexander polynomial}\/
\cite{Wada94-1} of a link $L$
associated with a representation $\rho:\G\to SL(2,{\C})$ is defined
to be the rational function
$$
\D_{L,\rho}(t)
=\frac{\det M_j}{\det\Phi(1-x_j)}
$$
and well-defined up to multiplication by
$t^{2k}~(k\in{\Z})$. 

\begin{remark}\label{rmk:trivial-rep}
By definition, 
$\D_{L,\rho}(t)$ is a rational function in the variable $t$, 
but it will be a Laurent polynomial if $L$ is a link with two or more components~\cite[Proposition~9]{Wada94-1}, or 
$L$ is a knot $K$ and $\rho$ is non-abelian~\cite[Theorem~3.1]{KtM05-1}. 
\end{remark}

We note that if $\rho$ and $\rho'$ are mutually conjugate
$SL(2,\C)$-representations,
then $\D_{L,\rho}(t) = \D_{L,\rho'}(t)$ holds.
If $\rho$ and $\rho'\colon \G \to SL(2,\C)$ are irreducible
representations with $\chi_\rho = \chi_{\rho'}$, then $\rho$ is conjugate to $\rho'$
(see \cite[Proposition 1.5.2]{CS83-1}), and hence
$\D_{L,\rho}(t) = \D_{L,\rho'}(t)$.
Moreover if $\rho$ and $\rho'$ are reducible representations  with $\chi_\rho = \chi_{\rho'}$, 
then $\D_{L,\rho}(t)$ and $\D_{L,\rho'}(t)$ 
are determined by diagonal
entries of images of $\rho$ and $\rho'$ and hence 
they are equivalent. 
Therefore, we can define the twisted Alexander polynomial
associated with $\chi \in X(L)$ to be $\D_{L,\rho}(t)$
where $\chi = \chi_\rho$, 
and we denote it by $\D_{L,\chi}(t)$. 

It is known that we can write the twisted Alexander polynomial 
$\D_{L,\chi}(t)$ without any ambiguity as
\[
\D_{L,\chi}(t)
=
\sum_{j=0}^{2l}\psi_j(\chi) t^j
\]
with $\C$-valued functions $\psi_j$ on $X(L)$ such that 
$\psi_k=\psi_{2l-k}~(0\leq k\leq l)$ (see \cite[Theorem 1.5]{FKK12-1}), 
where $l=||\alpha||_T$ is the Thurston norm of $\alpha$ (see \cite[Theorem~1.1]{FK06-1}).  

For a subvariety $X_0$ of $X(L)$, we say that $\psi_n$ is
\emph{the coefficient of the highest degree term of $\D_{L,\chi}(t)$ on $X_0$}
if $\psi_m\equiv 0$ for $m>n$ and $\psi_n\not\equiv 0$ on $X_0$.
Moreover we say $\D_{L,\rho}(t)$ 
(respectively, $\D_{L,\chi}(t)$) 
is \textit{monic}\/ 
if the coefficient of the highest degree term 
of $\D_{L,\rho}(t)$ 
(respectively, $\D_{L,\chi}(t)$) is one.  This makes sense because 
the twisted Alexander polynomial is well-defined up to multiplication by 
$t^{2k}~(k\in\Z)$. 
It is known that the twisted Alexander polynomial of a fibered link is monic 
for every non-abelian representation \cite[Theorem~1.1]{FK06-1} (see \cite{Cha03-1}, \cite{GKM05-1} 
for the case of fibered knots).

\section{Hyperbolic torsion polynomials}\label{sec:hyperbolic torsion polynomials}

\subsection{A conjecture of Dunfield, Friedl and Jackson}\label{subsec:DFJ}
Let $N$ be a hyperbolic $3$-manifold of finite volume. 
Then there is a faithful representation 
$\overline{\rho}_0:\pi_1(N)\to PSL(2,\C)\cong \mathrm{Isom}^+(\mathbb{H}^3)$, 
where $\mathbb{H}^3$ denotes the upper half-space model of the hyperbolic $3$-space, 
with discrete image such that $\mathbb{H}^3/\mathrm{Im}\,\overline{\rho}_0\cong N$. 
The representation $\overline{\rho}_0$ is called the \textit{holonomy representation}\, 
and is unique up to conjugation. 
It is known that a peripheral torus subgroup of $\overline{\rho}_0(\pi_1(N))\subset PSL(2,\C)$ 
is conjugate to a group of cosets of matrices of the form 
$\begin{pmatrix}1&\nu\\0&1\end{pmatrix}$, where $\nu\in\C$. 
In particular, the traces (defined up to sign) of the elements of such a subgroup are $\pm2$. 
Using a result of Thurston, 
$\overline{\rho}_0$ may be lifted to a representation in 
$SL(2,\C)$ which is also faithful and has discrete image. 

In \cite{DFJ12-1}, 
Dunfield, Friedl and Jackson studied the twisted Alexander polynomial 
$\D_{K,\rho_0}(t)$ for $N=E_K$, the exterior of a hyperbolic knot $K$ in $S^3$ 
and $\rho_0:G(K)\to SL(2,\C)$, a lift of $\overline{\rho}_0$ such that 
$\tr \rho_0(\mu_K)=2$ for a meridian $\mu_K$. 
They call $\D_{K,\rho_0}(t)$ the \textit{hyperbolic torsion polynomial}\, and denote it 
by $\mathcal{T}_K(t)$. Moreover, 
based on huge numerical calculations, they conjectured the following. 

\begin{conjecture}[{Dunfield-Friedl-Jackson~\cite[Conjecture~1.4]{DFJ12-1}}]\label{conj:DFJ}
For a hyperbolic knot $K$ in $S^3$, 
the hyperbolic torsion polynomial $\mathcal{T}_K(t)$ determines the Thurston norm $x(K)$ 
or equivalently the genus of $K$. 
Moreover, 
the knot $K$ is fibered if and only if $\mathcal{T}_K(t)$ is monic. 
\end{conjecture}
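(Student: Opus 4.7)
The plan is to attack the conjecture in two parts: first the Thurston norm/genus detection, then the fiberedness characterization. For both parts the starting point is the Friedl-Kim degree bound recalled in the paper, which gives
\[
\deg \mathcal{T}_K(t) \leq 2\,x(K) = 2(2g(K)-1)
\]
for any non-abelian $SL(2,\C)$-representation, together with the theorem that $\mathcal{T}_K(t)$ is monic whenever $K$ is fibered (\cite{FK06-1}). Thus one direction of each claim is free, and the task reduces to establishing the converse in each case using the specific structure of the discrete faithful lift $\rho_0$.

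For the genus part, the key step is to show that no cancellation occurs at the top degree, namely that $\psi_{2l}(\chi_{\rho_0}) \neq 0$. I would interpret $\mathcal{T}_K(t)$ as a Reidemeister torsion of $E_K$ with twisted coefficients and reduce the question to the non-vanishing of a twisted homology group of a minimal-genus Seifert surface $S$ under the restriction $\rho_0|_{\pi_1(S)}$. The strategy of Agol-Dunfield \cite{AD15-1} combines Gabai's sutured-manifold technology with the virtual specialness of hyperbolic $3$-manifolds (Agol-Wise) to produce enough closed essential surfaces carrying representations that detect the Thurston norm; extending this to \emph{all} hyperbolic knots requires showing that the restriction of $\rho_0$ to every incompressible Seifert surface subgroup is nondegenerate in the sense that its twisted $H_1$ is as small as possible. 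A complementary approach would be to use the Culler-Shalen theory to rule out boundary slopes at $\chi_{\rho_0}$ that would cause a degree drop.

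For fiberedness, I would invoke the Friedl-Vidussi theorem, which characterizes fiberedness of $E_K$ by monicness of twisted Alexander polynomials over all representations factoring through finite quotients of $\pi_1(E_K)$. Monicness of the single polynomial $\mathcal{T}_K(t)$ must then be promoted to monicness across this entire family. The idea is to use residual finiteness and LERF for hyperbolic knot groups (again from the Agol-Wise theory) to approximate $\rho_0$ by representations factoring through finite quotients, and to control the leading coefficient of the associated twisted polynomials in the limit via continuity of Reidemeister torsion on the appropriate stratum of the character variety.

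The hard part will be the genus detection for hyperbolic knots whose ordinary Alexander polynomial is degenerate, most starkly those with trivial Alexander polynomial; in such cases there is no classical data to fall back on, and Agol-Dunfield's argument succeeds only when sufficiently many separating incompressible surfaces are present. Handling the remaining cases appears to require a uniform statement about how the discrete faithful character $\chi_{\rho_0}$ sits inside $X(\pi_1(E_K))$ relative to its Culler-Morgan-Shalen compactification, so that the leading coefficient $\psi_{2l}$ is forced to be nonzero at $\chi_{\rho_0}$ by rigidity of the hyperbolic structure. Producing such a uniform argument, rather than a case-by-case one, is the principal obstacle.
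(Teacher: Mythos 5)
The statement you are trying to prove is a \emph{conjecture}: it is Conjecture~1.4 of Dunfield--Friedl--Jackson, restated in this paper as Conjecture~\ref{conj:DFJ}, and the paper offers no proof of it (nor does any exist in the literature; the paper only records that it has been \emph{verified} for knots with at most 15 crossings, twist knots, and certain $2$-bridge families, and that Agol--Dunfield proved the genus half for a restricted class). Your proposal is therefore not comparable to a proof in the paper, and, more importantly, it is not a proof: it is a survey of the known partial results together with an explicit admission that the essential steps are missing. The two places where the actual content would have to go are exactly the places you leave open. For the genus part, the claim that ``the restriction of $\rho_0$ to every incompressible Seifert surface subgroup is nondegenerate'' \emph{is} the conjecture (in its torsion formulation), not a lemma on the way to it; Agol--Dunfield's sutured-manifold argument genuinely requires their ``libroid'' hypothesis, and no uniform rigidity statement about where $\chi_{\rho_0}$ sits in $X(\pi_1(E_K))$ is known to force $\psi_{2l}(\chi_{\rho_0})\neq 0$.

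For the fiberedness part the proposed reduction is not merely incomplete but goes in the wrong logical direction. Friedl--Vidussi characterize fiberedness via monicness (and correct degree) of twisted Alexander polynomials over \emph{all} representations factoring through finite quotients; to use this you would need to deduce that entire family of conditions from monicness of the single polynomial $\mathcal{T}_K(t)$, and there is no mechanism for that. The approximation you suggest also fails concretely: the leading coefficient $\psi_{2l}$ is a regular function on a component of the character variety, but representations factoring through finite quotients live on entirely different components (indeed in different target groups), so ``continuity of Reidemeister torsion'' does not transport the value $\psi_{2l}(\chi_{\rho_0})=1$ to them; and even within one component, monicness is a closed condition, so a limit argument could only ever prove non-monicness is \emph{not} preserved, which is the opposite of what you need. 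By contrast, the verifications that do exist (including Theorem~\ref{hyp_thm} of this paper, for links) proceed by direct computation: one identifies the finite union of subvarieties of $X(L)$ on which the degree drops or the leading coefficient equals one, and then checks that the parabolic character $(2,2,z_0)$ with $z_0\notin\R$ lies outside them. That method is inherently family-by-family and does not suggest a route to the general conjecture.
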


\begin{remark}\label{rmk:norm_genus}
Let $\alpha_K\in H^1(E_K;\Z)=\mathrm{Hom}(G(K),\Z)$ 
be the abelianization, then we can see $||\alpha_K||_T=2g(K)-1$ and denote it by $x(K)$. 
\end{remark}

Conjecture~\ref{conj:DFJ} has been verified for all hyperbolic knots with at most 15 crossings \cite{DFJ12-1}, 
hyperbolic twist knots \cite{Morifuji12-1} and a certain wide class of hyperbolic $2$-bridge knots \cite{MT}. 
Recently Agol and Dunfield showed the former part of the conjecture for a large class of 
hyperbolic knots in $S^3$ which includes all special arborescent knots and many 
knots whose ordinary Alexander polynomial is trivial (see \cite{AD15-1} for details). 

\subsection{A generalization}\label{subsection:DFJ for links}

For a $\mu$-component hyperbolic link $L$ there are $2^\mu$ possible lifts of the 
holonomy representation $\overline{\rho}_0:G(L)\to PSL(2,\C)$ to 
an $SL(2,\C)$-representation. 
It is known that there is a canonical one-to-one correspondence between 
the set of lifts of the holonomy representation and the set of spin 
structures of the exterior of a link (see \cite{MP14-1}). 
Among them we focus on the lift 
$\rho_0:G(L)\to SL(2,\C)$ such that the images of the meridians of $L$ by $\rho_0$ are 
matrices in $SL(2,\C)$ with the trace two. 
Similar to the case of knots, 
in this paper, 
we call this kind of non-abelian $SL(2,\C)$-representation (or character) 
of the link group parabolic. 
That is, a non-abelian $SL(2,\C)$-representation $\rho:G(L)\to SL(2,\C)$ (or character $\chi_\rho$) is 
called \textit{parabolic}\, if the images of the meridians of $L$ by $\rho$ are matrices with the trace two. 
Then we propose the following conjecture. 

\begin{conjecture} \label{conj:hyp}
For a $\mu$-component oriented hyperbolic link $L$ in $S^3$ 
the twisted Alexander polynomial $\D_{L,\rho_0}(t)$ determines the Thurston norm, 
namely, 
we have
$$
\deg \Delta_{L, \rho_0}(t) = 2 ||\alpha||_T
$$
for $\alpha\in H^1(E_L;\Z)$ given by sending each meridian to one. 
Moreover, 
the link $L$ is fibered if and only if 
$\Delta_{L, \rho_0}(t)$ is monic.
\end{conjecture}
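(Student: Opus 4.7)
The plan is to split Conjecture~\ref{conj:hyp} into two independent assertions and treat each separately. By the Friedl--Kim bound recalled in Section~\ref{subsec:TAP} (\cite[Theorem~1.1]{FK06-1}), one automatically has $\deg \D_{L,\rho_0}(t) \le 2||\alpha||_T$ for every non-abelian $SL(2,\C)$-representation, and the same paper supplies the implication ``$L$ fibered $\Rightarrow$ $\D_{L,\rho_0}(t)$ monic''. So the conjecture reduces to (a) the coefficient $\psi_{2||\alpha||_T}$ of the top degree term is nonzero at the character $\chi_{\rho_0}$, and (b) if $\D_{L,\rho_0}(t)$ is monic then $L$ is fibered.

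For (a), the strategy is to exploit the fact that $\chi_{\rho_0}$ sits on the distinguished geometric component $X_0\subset X(L)$, which by Thurston's hyperbolic Dehn surgery theorem has complex dimension equal to the number $\mu$ of cusps of $E_L$. On $X_0$ the complex lengths of the meridians give local coordinates near $\chi_{\rho_0}$. The first task is to show that the regular function $\psi_{2||\alpha||_T}$ is not identically zero on $X_0$; this would follow by exhibiting a single character on $X_0$ at which $\D_{L,\chi}(t)$ achieves maximal degree, which in turn can be sought through a limiting family of characters with large meridian traces, where the Fox-calculus determinant can be controlled term by term. The second task is to argue $\chi_{\rho_0}$ lies outside the vanishing locus of $\psi_{2||\alpha||_T}$ on $X_0$, for which I would analyse one-parameter deformations of $\rho_0$ arising from hyperbolic Dehn fillings on subsets of cusps, expand $\psi_{2||\alpha||_T}$ asymptotically along each such family, and check that the leading term is nonzero.

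For (b), the natural route is to combine Agol's virtual fibering theorem---every cusped hyperbolic $3$-manifold is virtually fibered---with the behavior of twisted Alexander polynomials under finite covers via induced representations. Passing to a finite cover $\widetilde{E}_L\to E_L$ on which the lifted link is fibered produces a companion invariant with good monicness control; the goal is to then descend, using an adaptation of the Friedl--Vidussi fibering criterion, to conclude that if $L$ itself were not fibered then some twisted invariant of $L$ built from $\rho_0$ tensored with a representation factoring through the covering group would fail to be monic, and finally to compare this invariant with $\D_{L,\rho_0}(t)$ itself in order to transfer the failure back to $\rho_0$.

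The hard part will be step (a), and specifically the nonvanishing of $\psi_{2||\alpha||_T}$ precisely at the parabolic character $\chi_{\rho_0}$. Parabolic characters lie at the boundary of the locus of ``generic'' irreducible characters, and it is exactly at such boundary points where one expects extra algebraic degeneracies in the Fox-calculus determinant; this is essentially the reason why even the knot version (Conjecture~\ref{conj:DFJ}) remains open despite overwhelming experimental evidence. A successful proof will likely need either a conceptual interpretation of $\psi_{2||\alpha||_T}(\chi_{\rho_0})$ in terms of hyperbolic Reidemeister torsion, where analytic positivity could rule out vanishing, or a refined genericity argument using the specific local geometry of $X_0$ at the discrete faithful character together with control of the tangent cone of the vanishing locus of $\psi_{2||\alpha||_T}$ there.
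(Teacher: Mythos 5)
The statement you are trying to prove is stated in the paper as a \emph{conjecture} (Conjecture~\ref{conj:hyp}), not as a theorem: the authors do not prove it in general, and indeed even its knot case (Conjecture~\ref{conj:DFJ} of Dunfield, Friedl and Jackson) is open. What the paper actually proves is the special case of the double twist links $J(2m+1,2n+1)$ (Theorem~\ref{hyp_thm}), and it does so by a completely explicit computation: a two-generator presentation of the link group, Fox calculus to isolate the top and bottom coefficients of $\D_{L,\rho}(t)$ as $\frac{T_m(z)-2}{z-2}$ (resp.\ $\frac{T_n(v)-2}{v-2}$, etc.), Chebyshev identities to describe the loci in the Petersen--Tran character variety $\{R(x,y,z)=0\}$ where the degree drops or monicity accidentally holds, and finally a check (Lemmas~\ref{Z'_l} and \ref{Y'}) that none of these loci contains a parabolic character $(2,2,z)$ with $z\notin\R$, hence none contains $\chi_{\rho_0}$. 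Your proposal, by contrast, is a program for the full conjecture, and as a proof it has genuine gaps at both of its key steps.

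Concretely: in step (a) you correctly reduce to the nonvanishing of $\psi_{2\|\alpha\|_T}$ at $\chi_{\rho_0}$ (the upper bound $\deg\D_{L,\chi}(t)\le 2\|\alpha\|_T$ and the implication ``fibered $\Rightarrow$ monic'' are indeed already in \cite{FK06-1}), but you give no argument for either sub-task. Showing $\psi_{2\|\alpha\|_T}\not\equiv 0$ on the geometric component is plausible but not carried out (``can be sought through a limiting family'' is not a proof), and showing it is nonzero \emph{at} the discrete faithful character is precisely the open content of the conjecture; an asymptotic expansion along Dehn-filling families tells you about nearby fillings, not about the cusped point itself, and you acknowledge this yourself in your final paragraph. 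In step (b), the combination of Agol's virtual fibering theorem with the Friedl--Vidussi fibering criterion shows that the \emph{collection} of twisted Alexander polynomials over representations factoring through finite quotients detects fiberedness; there is no known mechanism to descend from that collection to the single representation $\rho_0$, and your proposed ``comparison'' of the induced-representation invariant with $\D_{L,\rho_0}(t)$ is exactly the missing step. If you want a provable statement, follow the paper's route: restrict to a family where the character variety and the relevant coefficient functions can be computed in closed form, locate the exceptional loci, and verify directly that the parabolic characters with $z\notin\R$ avoid them.
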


\begin{remark}\label{rmk:Alexander-Thurston}
For an alternating link $L$, 
it is known that the Thurston norm $||\alpha||_T$ 
coincides with the Alexander norm $||\alpha||_A$ for $\alpha\in H^1(E_L;\Z)$ 
as above (see \cite{McMullen02-1} for details). The Alexander norm is determined by 
the (multi-variable) Alexander polynomial of $L$, so that we have 
$||\alpha||_A=\deg\D_{L}(t)-1$, where $\D_L(t)$ denotes the reduced Alexander polynomial. 
Hence the equality in Conjecture~\ref{conj:hyp} will be 
$\deg \Delta_{L, \rho_0}(t) = 4g(L)+2(\mu-2)$ for a $\mu$-component alternating 
hyperbolic link $L$. 
\end{remark}

In the next section, 
we show that Conjecture~\ref{conj:hyp} holds true for a wide family of $2$-bridge links 
which are called the double twist links. 

\section{Fibering and genus detecting problems}\label{sec:examples}

In this section
we discuss the fibering and genus detecting problems for the double twist links. 
In particular, for  these links, 
we specify a finite number of loci in the character variety which characterize fiberedness and the genus 
of the links. This result can be regarded as a generalization of 
\cite[Theorems~4.3 and 4.4]{KM12-1}. As a consequence, 
we will show in Subsection~\ref{subsec:main theorem} that Conjecture~\ref{conj:hyp} holds true 
for the double twist links. 

We start by reviewing Chebyshev polynomials and their properties.

\subsection{Chebyshev polynomials (1)} 
Let $S_k(q)$ be the Chebyshev polynomials of the second kind defined by $S_0(q)=1$, $S_1(q)=q$ and $S_{k+1}(q) =  q \, S_k(q) - S_{k-1}(q)$ for all integers $k$. Similarly, let $T_k(q)$ be the Chebyshev polynomials of the first kind defined by $T_0(q)=2$, $T_1(q)=q$ and $T_{k+1}(q) =  q \, T_k(q) - T_{k-1}(q)$ for all integers $k$. Note that $T_k(q) = S_k(q) - S_{k-2}(q)$. 

The following three lemmas are elementary, and hence their proofs are omitted.

\begin{lemma} \label{chev1}
Write $q = v + v^{-1}$. Then $$T_k(q) = v^k + v^{-k}.$$ 
We have $S_k(2) = k+1$ and $S_k(-2) = (-1)^k(k+1)$. If $v \not= \pm 1$ then $$S_k(q) = \frac{v^{k+1} - v^{-(k+1)}}{v -  v^{-1}}.$$ 
In particular, if $q = 2\cos \beta$, where $\disp{\frac{\beta}{\pi} \in \R \setminus \Z}$, 
then $\disp{S_k(q) = \frac{\sin (k+1)\beta}{\sin \beta}}$.
\end{lemma}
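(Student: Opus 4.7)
The plan is to verify each formula by induction on $k\ge 0$ using the three-term recurrence that defines the polynomial in question, then to extend to negative $k$ by exploiting the symmetry of the recurrence, and finally to read off the trigonometric specialization.

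For the identity $T_k(q) = v^k + v^{-k}$ with $q = v+v^{-1}$, the base cases are $T_0(q) = 2 = v^0 + v^{-0}$ and $T_1(q) = v+v^{-1}$. Assuming the formula holds at indices $k-1$ and $k$, the inductive step is the one-line computation
\[
q\, T_k(q) - T_{k-1}(q) = (v+v^{-1})(v^k + v^{-k}) - (v^{k-1} + v^{-(k-1)}) = v^{k+1} + v^{-(k+1)}.
\]
For the formula $S_k(q) = (v^{k+1} - v^{-(k+1)})/(v - v^{-1})$, valid when $v \ne \pm 1$, one checks $S_0(q) = 1$ and $S_1(q) = q$ against the right-hand side and then performs the analogous telescoping calculation in the inductive step. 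Negative indices are handled by rewriting the recurrence as $T_{k-1} = q T_k - T_{k+1}$ (resp.\ for $S_k$), which matches the symmetry $v \leftrightarrow v^{-1}$ visible in both closed forms.

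For the special values $S_k(\pm 2)$, one cannot specialize the rational expression directly, since $v = \pm 1$ makes the denominator vanish. Instead, I would substitute $q = 2$ (resp.\ $q = -2$) into the defining recurrence and do a direct induction: the recurrence becomes $S_{k+1} = 2 S_k - S_{k-1}$ (resp.\ $S_{k+1} = -2 S_k - S_{k-1}$), and starting from $S_0 = 1$, $S_1 = 2$ (resp.\ $S_1 = -2$) one reads off $S_k(2) = k+1$ and $S_k(-2) = (-1)^k(k+1)$. Alternatively, these are the limits of the rational expression as $v \to \pm 1$, computable by l'Hôpital.

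The trigonometric corollary falls out by the substitution $v = e^{i\beta}$, which gives $q = 2\cos\beta$, $v^{k+1} - v^{-(k+1)} = 2i\sin((k+1)\beta)$, and $v - v^{-1} = 2i\sin\beta$; the hypothesis $\beta/\pi \in \R\setminus\Z$ ensures $v \ne \pm 1$ so that the generic formula applies and the denominator $\sin\beta$ is nonzero. There is no genuine obstacle here — the only subtlety is remembering to treat the degenerate case $v = \pm 1$ separately from the generic closed form, which is why the statement splits into the three displayed parts.
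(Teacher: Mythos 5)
Your proof is correct and is exactly the standard elementary argument the paper has in mind (the authors omit the proof as elementary): induction on $k$ via the defining three-term recurrence for the closed forms, a separate direct induction for the degenerate values $q=\pm 2$, and the substitution $v=e^{i\beta}$ for the trigonometric form. All the inductive steps check out, and you correctly flag the only subtlety, namely that $v=\pm 1$ must be treated outside the generic formula.
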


\begin{lemma} \label{chev0}
We have
$$S^2_k(q) + S^2_{k-1}(q) - q S_k(q) S_{k-1}(q) =1.$$
\end{lemma}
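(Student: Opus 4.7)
The plan is to prove the identity by straightforward induction on $k$, using only the defining recursion $S_{k+1}(q) = q\,S_k(q) - S_{k-1}(q)$ together with the initial values $S_0(q)=1$ and $S_{-1}(q)=0$ (the latter obtained by setting $k=0$ in the recursion and comparing with $S_1(q)=q$). No deep structural fact about Chebyshev polynomials is required; the recurrence itself is perfectly tailored for this identity.

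For the base case $k=0$ the expression reduces to $1 + 0 - q\cdot 1\cdot 0 = 1$. For the inductive step, assume the identity holds for index $k$, and consider
\[
S_{k+1}^2(q) + S_k^2(q) - q\,S_{k+1}(q)\,S_k(q)
= S_{k+1}(q)\bigl(S_{k+1}(q) - q\,S_k(q)\bigr) + S_k^2(q).
\]
The recurrence gives $S_{k+1}(q) - q\,S_k(q) = -S_{k-1}(q)$, so the right-hand side equals $S_k^2(q) - S_{k+1}(q)\,S_{k-1}(q)$. Hence what needs to be established is the equivalent identity
\[
S_k^2(q) - S_{k+1}(q)\,S_{k-1}(q) = 1.
\]
But this is exactly what the inductive hypothesis supplies: rewriting the hypothesis as $S_k^2(q) - S_{k-1}(q)\bigl(q\,S_k(q) - S_{k-1}(q)\bigr) = 1$ and applying the recurrence $q\,S_k(q) - S_{k-1}(q) = S_{k+1}(q)$ yields precisely the needed formula. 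This completes the induction for $k \ge 0$, and an identical argument (or the symmetry $S_{-k-2}(q) = -S_k(q)$ that follows from the recursion) handles negative indices.

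I expect no real obstacle here; the only subtle point is keeping track of which reformulation of the inductive hypothesis to invoke. As a sanity check one can also verify the identity directly via the Binet-type formula $S_k(q) = (v^{k+1}-v^{-k-1})/(v - v^{-1})$ from Lemma~\ref{chev1}: expanding $S_k^2 + S_{k-1}^2 - q\,S_k S_{k-1}$ in the variable $v$ with $q = v + v^{-1}$ collapses to $(v - v^{-1})^2/(v - v^{-1})^2 = 1$ after cancellation. Either proof is short, so I would present the inductive version for its algebraic transparency and because it avoids the case distinction $v \neq \pm 1$.
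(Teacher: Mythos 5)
Your proof is correct. The paper gives no argument for this lemma at all --- it is one of the three lemmas declared ``elementary, and hence their proofs are omitted'' --- so there is nothing to compare against; your induction (base case $S_0=1$, $S_{-1}=0$, and the inductive step reducing $S_{k+1}^2+S_k^2-qS_{k+1}S_k$ to the equivalent form $S_k^2-S_{k+1}S_{k-1}=1$ of the hypothesis) is a clean and complete way to supply the missing details, and your remark on negative indices correctly covers the fact that the paper defines $S_k$ for all integers $k$. The Binet-formula sanity check matches the technique the paper actually uses in the neighboring Lemmas~\ref{chev3} and \ref{chev6}, so either version would fit the paper's style.
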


\begin{lemma} \label{chev2}
For any positive integer $k$ we have 
$$S_k(q) = \prod_{j=1}^k \left(q - 2 \cos \frac{j \pi}{k+1}\right).$$
In particular, all the roots of $S_k(q)$ are real numbers strictly between $-2$ and $2$. 
\end{lemma}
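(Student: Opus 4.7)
The plan is to locate the roots of $S_k(q)$ explicitly using the trigonometric identity already established in Lemma \ref{chev1}, and then to invoke a degree-and-monic count to obtain the factorization. Nothing deeper than this is required, because $q\in(-2,2)$ is parametrised by $\beta\in(0,\pi)$ via $q = 2\cos\beta$, and Lemma \ref{chev1} converts $S_k$ into an elementary sine quotient on this range.

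First, I would verify by an immediate induction on $k$, using $S_0=1$, $S_1=q$ and the recursion $S_{k+1}(q)=q\,S_k(q)-S_{k-1}(q)$, that $S_k(q)$ is a monic polynomial of degree exactly $k$ in the variable $q$. This is the only structural fact needed beyond Lemma \ref{chev1}.

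Next, for each $j\in\{1,2,\ldots,k\}$, set $\beta_j=\dfrac{j\pi}{k+1}$. Since $0<\beta_j<\pi$, in particular $\beta_j/\pi\in\R\setminus\Z$, so the trigonometric formula in Lemma \ref{chev1} applies with $q=2\cos\beta_j$ and gives
$$S_k(2\cos\beta_j)=\frac{\sin((k+1)\beta_j)}{\sin\beta_j}=\frac{\sin(j\pi)}{\sin\beta_j}=0.$$
Thus each $q_j:=2\cos\beta_j$ is a root of $S_k$. Because cosine is strictly monotone on $(0,\pi)$, the values $q_1,\ldots,q_k$ are pairwise distinct; and because $\beta_j\in(0,\pi)$, each $q_j$ lies strictly between $-2$ and $2$.

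Finally I would conclude: $S_k(q)$ is monic of degree $k$ and has $k$ distinct roots $q_1,\ldots,q_k$, so it must equal $\prod_{j=1}^{k}(q-q_j)$, which is exactly the claimed identity. The final sentence of the lemma about the location of roots follows immediately from the range of $\beta_j$. There is essentially no obstacle here — all of the analytic content sits in Lemma \ref{chev1}, and the present lemma is just the observation that the $k$ zeros produced there, together with the degree and leading coefficient of $S_k$, suffice to pin down the full factorisation.
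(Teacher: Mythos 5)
Your proof is correct, and it is the standard argument the authors evidently had in mind: the paper explicitly omits the proof of this lemma as ``elementary,'' and your route --- monicity and degree $k$ by induction on the recursion, the $k$ distinct roots $2\cos\frac{j\pi}{k+1}$ from the sine quotient in Lemma~\ref{chev1}, and the resulting factorisation --- is exactly the intended one. No gaps.
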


\begin{lemma} \label{chev3}
\begin{enumerate}
\item  
If $k \ge 1$ is even, then $$T_k(q) -2 = (q-2)(q+2) \prod_{j=1}^{\frac{k}{2}-1} \left( q - 2 \cos \frac{2j\pi}{k} \right)^2.$$
\item  
If $k \ge 1$ is odd, then $$T_k(q) -2 = (q-2) \prod_{j=1}^{\frac{k-1}{2}} \left( q - 2 \cos \frac{2j\pi}{k} \right)^2.$$
\end{enumerate}
\end{lemma}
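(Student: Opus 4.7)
The plan is to use the trigonometric parametrization $q = 2\cos\theta$ together with Lemma~\ref{chev1}, which gives $T_k(q) = v^k + v^{-k} = 2\cos(k\theta)$ when $v = e^{i\theta}$. With this substitution, $T_k(q) - 2 = 2\cos(k\theta) - 2 = -4\sin^2(k\theta/2)$, so the equation $T_k(q) = 2$ is equivalent to $k\theta \in 2\pi\Z$. Restricting to $\theta \in [0,\pi]$ (which parametrizes $q \in [-2,2]$ bijectively) yields the candidate roots $q_j = 2\cos(2j\pi/k)$ for $j=0,1,\ldots,\lfloor k/2\rfloor$.

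Next I would determine the multiplicity of each root. Differentiating the identity $T_k(2\cos\theta) = 2\cos(k\theta)$ with respect to $\theta$ gives $T_k'(2\cos\theta) = k \sin(k\theta)/\sin\theta$. For an interior root $\theta = 2j\pi/k$ with $0 < j < k/2$, the numerator $\sin(k\theta) = \sin(2j\pi)$ vanishes while $\sin\theta \ne 0$, so $T_k'(q_j) = 0$ and $q_j$ is a multiple root of $T_k(q) - 2$. A short Taylor expansion of $\cos(k\theta) - 1$ around $\theta = 2j\pi/k$ shows that the vanishing is of order exactly two in $\theta$, hence of order exactly two in $q$ since $dq/d\theta = -2\sin\theta \ne 0$ at these interior points. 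At the boundary values $\theta = 0$ (always) and $\theta = \pi$ (when $k$ is even), both $\sin\theta$ and $\sin(k\theta)$ vanish, and the limit $\lim k\sin(k\theta)/\sin\theta = \pm k^2 \ne 0$ shows that $q = 2$ and (when relevant) $q = -2$ are simple roots.

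Finally I would count degrees to confirm we have accounted for every root. For $k$ even there is one simple root at $q=2$, one at $q=-2$, and double roots at the $k/2 - 1$ interior values, totaling $1 + 1 + 2(k/2 - 1) = k$; for $k$ odd there is a simple root at $q = 2$ and double roots at the $(k-1)/2$ interior values, totaling $1 + 2 \cdot (k-1)/2 = k$. An easy induction on the recursion defining $T_k$ shows that $T_k(q)$ is monic of degree $k$, so $T_k(q) - 2$ is also monic of degree $k$, and therefore equals the displayed product in each case.

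The only step that requires any care is the multiplicity-two claim at the interior roots; everything else is bookkeeping. I would handle that either by the Taylor expansion indicated above or, equivalently, by noting that $T_k''(q_j) \ne 0$ at interior roots, which follows from a second differentiation of $T_k(2\cos\theta) = 2\cos(k\theta)$ and the non-vanishing of $\sin\theta$ and $\cos(k\theta) = 1$ at $\theta = 2j\pi/k$.
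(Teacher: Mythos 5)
Your proof is correct, but it follows a genuinely different route from the paper's. The paper argues purely algebraically: writing $q = v + v^{-1}$ so that $T_k(q) - 2 = v^{-k}(v^k-1)^2$, it factors $v^k - 1$ over the $k$th roots of unity and pairs each factor $v - e^{2\pi i j/k}$ with its conjugate partner $v - e^{-2\pi i j/k}$ to obtain $v^2 + 1 - 2v\cos\frac{2j\pi}{k} = v\left(q - 2\cos\frac{2j\pi}{k}\right)$. In that approach the squares in the product come for free from the square in $(v^k-1)^2$, and the distinction between simple roots at $q = \pm 2$ and double roots at the interior values is visible at a glance: the self-conjugate roots of unity $\pm 1$ have no partner, so $(v\mp 1)^2 = v(q\mp 2)$ contributes $q \mp 2$ only to the first power. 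Your argument instead parametrizes $q = 2\cos\theta$, locates the roots from $\cos(k\theta) = 1$, establishes the exact multiplicities by differentiating (or Taylor-expanding) $T_k(2\cos\theta) = 2\cos(k\theta)$, and closes with a degree count against the monic degree-$k$ polynomial $T_k(q) - 2$. All of these steps check out --- in particular the degree count correctly rules out roots outside $[-2,2]$, and the boundary limits $\lim k\sin(k\theta)/\sin\theta = \pm k^2$ correctly certify simplicity at $q = \pm 2$ --- but you have to work for the multiplicity-two statement that the paper's factorization hands over automatically. The trade-off is that your version makes the location and nature of the roots more transparent, at the cost of a calculus detour the paper avoids entirely.
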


\begin{proof}
Write $q = v + v^{-1}$. By Lemma \ref{chev1} we have $T_k(q) = v^k + v^{-k}.$ Then 
\begin{eqnarray*}
T_k(q) -2 &=& v^{-k} (v^k-1)^2  \\
          &=& v^{-k} (v-1)^2 \prod_{1 \le j \le k-1}  \left( v - e^{2\pi i\frac{j}{k}} \right)^2.
\end{eqnarray*}
Grouping the terms involving $j$ and $k-j$ we have 
\begin{eqnarray*}
\left( v - e^{2\pi i\frac{j}{k}} \right) \left( v - e^{2\pi i\frac{k-j}{k}} \right) &=& \left( v - e^{2\pi i\frac{j}{k}} \right) \left( v - e^{-2\pi i\frac{j}{k}} \right) \\
&=& v^2+1 - 2v \cos \frac{2j\pi}{k}.
\end{eqnarray*}

Suppose $k$ is even. Since $e^{2\pi i\frac{j}{k}} = -1 $ if $\disp{j=\frac{k}{2}}$, we have
\begin{eqnarray*}
T_k(q) -2 &=&  v^{-k} (v-1)^2 (v+1)^2 \prod_{1 \le j \le \frac{k}{2}-1}  \left( v^2+1 - 2v \cos \frac{2j\pi}{k} \right)^2 \\
&=& (q-2)(q+2) \prod_{j=1}^{\frac{k}{2}-1} \left( q - 2 \cos \frac{2j\pi}{k} \right)^2.
\end{eqnarray*}
The case of odd $k$ is similar.
\end{proof}

\begin{lemma} \label{chev4}
\begin{enumerate}
\item 
If $k \ge 1$ is odd, then $$T_k(q) - q = (q-2)(q+2) \prod_{j=1}^{\frac{k-3}{2}} \left( q - 2 \cos \frac{2j\pi}{k-1} \right) \prod_{j=1}^{\frac{k-1}{2}} \left( q - 2 \cos \frac{2j\pi}{k+1} \right).$$

\item 
If $k \ge 1$ is even, then $$T_k(q) - q = (q-2) \prod_{j=1}^{\frac{k-2}{2}} \left( q - 2 \cos \frac{2j\pi}{k-1} \right) \prod_{j=1}^{\frac{k}{2}} \left( q - 2 \cos \frac{2j\pi}{k+1} \right).$$
\end{enumerate}
\end{lemma}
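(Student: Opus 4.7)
The plan is to mirror the proof of Lemma~\ref{chev3}. Substituting $q = v + v^{-1}$ so that $T_k(q) = v^k + v^{-k}$ by Lemma~\ref{chev1}, the starting point is the algebraic identity
$$
T_k(q) - q \;=\; v^{-k}\bigl(v^{2k} - v^{k+1} - v^{k-1} + 1\bigr) \;=\; v^{-k}(v^{k-1} - 1)(v^{k+1} - 1),
$$
which plays the role that $T_k(q) - 2 = v^{-k}(v^k - 1)^2$ plays in Lemma~\ref{chev3}. The key factorization of the middle polynomial is spotted by writing it as $v^{k+1}(v^{k-1}-1) - (v^{k-1}-1)$.

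Next, I would factor each of $v^{k-1} - 1$ and $v^{k+1} - 1$ over $\C$ using roots of unity and group the complex conjugate pairs $e^{\pm 2\pi i j/n}$ to produce the real quadratic factors
$$
\bigl(v - e^{2\pi ij/n}\bigr)\bigl(v - e^{-2\pi ij/n}\bigr) \;=\; v^2 - 2v\cos\tfrac{2\pi j}{n} + 1 \;=\; v\bigl(q - 2\cos\tfrac{2\pi j}{n}\bigr).
$$
The split into the two cases of the lemma is dictated by the parity of $k$. When $k$ is odd, both $k-1$ and $k+1$ are even, so $v^{k-1}-1$ and $v^{k+1}-1$ each contribute the two real linear factors $v-1$ and $v+1$; combining them via the identities $(v-1)^2 = v(q-2)$ and $(v+1)^2 = v(q+2)$ yields $v^2(q-2)(q+2)$. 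When $k$ is even, both $k-1$ and $k+1$ are odd, so each polynomial contributes only $(v-1)$, and $(v-1)^2 = v(q-2)$ accounts for the single $(q-2)$ in the formula.

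Finally, I would track the cumulative powers of $v$ picked up from converting each quadratic factor and the real linear factors into expressions in $q$, and verify that the total exponent equals exactly $k$, cancelling the leading $v^{-k}$. A direct count gives $2 + \tfrac{k-3}{2} + \tfrac{k-1}{2} = k$ in the odd case and $1 + \tfrac{k-2}{2} + \tfrac{k}{2} = k$ in the even case, leaving precisely the two formulas in the statement. The only potential obstacle is keeping the parity bookkeeping straight so that the index ranges $1 \le j \le \tfrac{k-3}{2}, \tfrac{k-1}{2}, \tfrac{k-2}{2}, \tfrac{k}{2}$ match correctly and no root of unity is double counted; once the parity cases are separated this is entirely mechanical.
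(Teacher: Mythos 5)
Your proposal is correct and is essentially the proof the paper intends: the paper only says the argument is ``similar to that of Lemma~\ref{chev3}'', and your key identity $T_k(q)-q=v^{-k}(v^{k-1}-1)(v^{k+1}-1)$ together with the pairing of conjugate roots of unity is exactly the analogue of the factorization $T_k(q)-2=v^{-k}(v^k-1)^2$ used there. The parity bookkeeping and the exponent count $v^k$ all check out, so nothing is missing.
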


\begin{proof}
The proof of Lemma \ref{chev4} is similar to that of Lemma \ref{chev3}.
\end{proof}

The following lemma is known, see e.g. \cite[Proposition 2.4]{Tr-Rtorsion}.

\begin{lemma}\label{chev5}
Suppose $Q = \begin{pmatrix}
Q_{11} & Q_{12}\\
Q_{21} & Q_{22}
\end{pmatrix} \in SL(2,\C)$. Let $q = \tr Q$. Then for any positive integer $k$ we have
$$Q^k = \begin{pmatrix}
S_k(q) - Q_{22} S_{k-1}(q) & Q_{12} S_{k-1}(q)\\
Q_{21} S_{k-1}(q) & S_k(q) - Q_{11} S_{k-1}(q)
\end{pmatrix}
$$ and
$$\det (I + Q + \cdots + Q^{k-1}) = \frac{T_k(q)-2}{q-2}.$$
\end{lemma}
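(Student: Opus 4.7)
The plan is to prove both assertions as direct consequences of the Cayley--Hamilton theorem applied to $Q$, combined with the defining recursion for the Chebyshev polynomials.

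First I would establish the scalar-matrix identity
\[
Q^k = S_{k-1}(q)\, Q - S_{k-2}(q)\, I \quad \text{for all } k \ge 1,
\]
by induction. Since $Q \in SL(2,\C)$, Cayley--Hamilton gives $Q^2 = q Q - I$, which handles $k=2$ (and $k=1$ is trivial using $S_0=1$, $S_{-1}=0$). The inductive step is the one-line computation
\[
Q^{k+1} = Q \cdot Q^k = S_{k-1}(q)(qQ - I) - S_{k-2}(q)\, Q = (q S_{k-1}(q) - S_{k-2}(q))\, Q - S_{k-1}(q)\, I,
\]
and the coefficient of $Q$ is $S_k(q)$ by the Chebyshev recursion.

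Next I would read off the entries of $Q^k$ from this identity. The off-diagonal entries $Q_{12} S_{k-1}(q)$ and $Q_{21} S_{k-1}(q)$ appear directly. For the $(1,1)$ entry I get $Q_{11} S_{k-1}(q) - S_{k-2}(q)$, which must be shown equal to $S_k(q) - Q_{22} S_{k-1}(q)$; this is the relation $S_k(q) + S_{k-2}(q) = (Q_{11} + Q_{22})\, S_{k-1}(q) = q\, S_{k-1}(q)$, which is again just the Chebyshev recursion. The $(2,2)$ entry is symmetric.

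For the determinant identity, I would use the telescoping factorization
\[
(I - Q)(I + Q + \cdots + Q^{k-1}) = I - Q^k,
\]
which is valid in the matrix ring regardless of invertibility. Since $\det(I - Q) = 1 - \tr Q + \det Q = 2 - q$, on the Zariski-open locus $q \neq 2$ we get
\[
\det(I + Q + \cdots + Q^{k-1}) = \frac{\det(I - Q^k)}{2 - q} = \frac{2 - \tr(Q^k)}{2-q}.
\]
From the first part, $\tr(Q^k) = 2 S_k(q) - q\, S_{k-1}(q) = S_k(q) - S_{k-2}(q) = T_k(q)$, so the ratio equals $\frac{T_k(q) - 2}{q - 2}$, as claimed. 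Both sides of the identity are polynomials in the entries of $Q$, so the formula extends to $q = 2$ by continuity (equivalently, by noting that both sides are polynomial expressions in the entries and they agree on a dense subset).

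There is no genuine obstacle here: the lemma is a standard identity, and the only mild subtlety is handling the apparent singularity at $q = 2$ in the determinant formula, which is resolved by viewing everything as polynomial identities and invoking density.
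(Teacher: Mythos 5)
Your proof is correct. Note that the paper does not actually prove this lemma: it is stated as known and the proof is deferred to a citation (Proposition~2.4 of the reference on Reidemeister torsion and Dehn surgery), so there is no in-paper argument to compare against. Your route --- Cayley--Hamilton giving $Q^k = S_{k-1}(q)\,Q - S_{k-2}(q)\,I$, reading off the entries via $q = Q_{11}+Q_{22}$, and then the telescoping identity $(I-Q)(I+Q+\cdots+Q^{k-1}) = I - Q^k$ together with $\det(I-Q)=2-q$ and $\tr Q^k = 2S_k(q)-qS_{k-1}(q) = T_k(q)$ --- is the standard one and is complete. The only point worth making explicit is that the density argument at $q=2$ requires $\frac{T_k(q)-2}{q-2}$ to be a genuine polynomial in $q$; this follows from the factorization in Lemma~\ref{chev3} (or from $T_k(v+v^{-1})-2 = v^{-k}(v^k-1)^2$ being divisible by $v^{-1}(v-1)^2$), and you implicitly use it, so the argument stands.
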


\subsection{Double twist links}

For integers $m$ and $n$, consider the double twist link $L=J(2m+1,2n+1)$ 
which is the $2$-bridge link corresponding to the rational number 
$\disp{\frac{2n+1}{4mn+2(m+n)}}$ (see Figure~2). By \cite[Lemma~3.2]{Petersen-Tran} 
(and \cite[Section~2.3]{MPL} also), the link group of $L$  has a presentation 
$$G(L)= \left\la a, b \mid \big( (ab^{-1})^m ab (a^{-1}b)^m \big)^n (ab^{-1})^m 
= (b^{-1}a)^m \big( (ba^{-1})^m ba (b^{-1}a)^m \big)^n \right\ra,$$
where $a$ and $b$ are meridians of $L$ depicted in Figure 2. 

\begin{figure}\label{fig:2}
\setlength{\unitlength}{0.09mm}
\thicklines{
\begin{picture}(300,560)(380,30)
\put(216,300){$\blacktriangledown$}
\put(160,400){\line(1,0){50}}
\put(250,400){\line(1,0){50}}
\put(290,391){$\blacktriangleright$}
\put(290,425){$a$}
\put(760,400){\line(1,0){50}}
\put(850,400){\line(1,0){50}}
\put(740,390){$\blacktriangleleft$}
\put(740,420){$b$}
\put(230,0){\line(0,1){590}}
\put(230,0){\line(1,0){125}}
\put(330,100){\line(1,0){25}}
\put(330,100){\line(0,1){125}}
\put(355,0){\line(3,4){30}}
\put(397,60){\line(3,4){30}}
\put(430,0){\line(3,4){30}}
\put(478,60){\line(3,4){30}}
\put(353,99){\line(3,-4){74}}
\put(430,100){\line(3,-4){74}}
\put(630,0){\line(3,4){30}}
\put(675,60){\line(3,4){30}}
\put(630,100){\line(3,-4){75}}
\put(330,225){\line(1,0){150}}
\put(480,225){\line(0,1){40}}
\put(480,413){\line(3,2){100}}
\put(480,480){\line(3,2){100}}
\put(480,480){\line(3,-2){40}}
\put(540,440){\line(3,-2){40}}
\put(480,549){\line(3,-2){40}}
\put(541,507){\line(3,-2){40}}
\put(480,330){\line(3,-2){40}}
\put(540,290){\line(3,-2){40}}
\put(480,265){\line(3,2){100}}
\put(230,590){\line(1,0){250}}
\put(480,590){\line(0,-1){40}}
\put(580,548){\line(0,1){42}}
\put(580,590){\line(1,0){250}}
\put(830,590){\line(0,-1){590}}
\put(706,0){\line(1,0){125}}
\put(580,225){\line(1,0){150}}
\put(816,300){$\blacktriangle$}
\put(580,225){\line(0,1){40}}
\put(730,100){\line(0,1){125}}
\put(730,100){\line(-1,0){25}}
\put(530,44){{\tiny $\bullet~\bullet~\bullet$}}
\put(525,345){{\tiny $\bullet$}}
\put(525,365){{\tiny $\bullet$}}
\put(525,385){{\tiny $\bullet$}}
\put(410,115){{\small $2n+1$ crossings}}
\put(400,360){{\small $2m+1$}}
\end{picture}
}
\caption{The double twist link $J(2m+1,2n+1)$ with $m,n \ge 1$.}
\end{figure}

By setting $w=(b^{-1}a)^m \big( (ba^{-1})^m ba (b^{-1}a)^m \big)^n$, we can rewrite the link group as 
$$G(L) = \la a,b \mid  awa^{-1} = w\ra.$$

Let $\rho:G(L)\to SL(2,\C)$ be a non-abelian representation and 
$r=awa^{-1}w^{-1}$. We have 
$$\frac{\partial r}{\partial b} 
= a \left(\frac{\partial w}{\partial b} - wa^{-1}w^{-1} \frac{\partial w}{\partial b}\right) 
= a (1 - wa^{-1}w^{-1})\frac{\partial w}{\partial b}.$$
Hence 
$\disp{\Delta_{L,\rho} (t)= \det \Phi\left(\frac{\partial r}{\partial b}\right) 
\big/ \det \Phi(1-a) = \det \Phi\left(\frac{\partial w}{\partial b}\right)}$.

We consider only the case $m,n \ge 1$. The other cases are similar. Let $u = (ba^{-1})^m ba (b^{-1}a)^m$. Then $w = (b^{-1}a)^m u^n$. We have
$$\frac{\partial w}{\partial b} = \left( 1+ (b^{-1}a) + \cdots + (b^{-1}a)^{m-1} \right) (-b^{-1}) + (b^{-1}a)^m (1+u+ \cdots +u^{n-1})\frac{\partial u}{\partial b},$$
where
\begin{eqnarray*}
\frac{\partial u}{\partial b} &=& \frac{\partial  \big( b(a^{-1}b)^m(ab^{-1})^ma \big)}{\partial b} \\
&=& 1 + b (1+(a^{-1}b) + \cdots + (a^{-1}b)^{m-1})a^{-1} \\
&& - b(a^{-1}b)^m (1+(ab^{-1})+ \cdots + (ab^{-1})^{m-1})ab^{-1} .
\end{eqnarray*}

For $h \in G(L)$ we denote $\rho(h)$ by $H$. 
With the orientation of $L$ as in Figure 2, the genus is given by $g(L) = n$ and $L$ is fibered if and only if $m=1$. (These facts can be proved by computing the reduced Alexander polynomial of $L$ and then applying \cite[Theorem 1.1]{Murasugi63-1}.) Moreover, $\Phi(a) = t A$ and $\Phi(b) = t B$. Then $\Phi(u) = t^2 U$. 
The highest degree term (in $t$) of $\Delta_{L, \rho}(t)$ is 
\begin{eqnarray*}
&& \det \Phi\left( -(b^{-1}a)^m u^{n-1} b(a^{-1}b)^m (1+ab^{-1}+ \cdots + (ab^{-1})^{m-1})ab^{-1}\right) \\
&=&  \left(t^{2n-1})^2 \det \Phi(1+ab^{-1}+ \cdots + (ab^{-1})^{m-1}\right)\\
&=& t^{4n-2} \, \frac{T_m(z)-2}{z-2},
\end{eqnarray*}
where 
$z = \tr AB^{-1}$. Here we apply Lemma \ref{chev5} with $Q=AB^{-1}$. 
Similarly, the lowest degree term of $\Delta_{L, \rho}(t)$ is $\disp{t^{-2} \, \frac{T_m(z)-2}{z-2}}$. 

By \cite[Theorem 1.1]{Petersen-Tran} the non-abelian character variety of $L=J(2m+1,2n+1)$ is the zero set in $\C^3$ of the polynomial $$R(x,y,z) = S_{m-1}(z) S_n(v) - S_m(z) S_{n-1}(v),$$
where $x = \tr A, ~y=\tr B$ and 
\begin{eqnarray*}
v = \tr U &=& \big( x S_m(z) - y S_{m-1}(z) \big) \big( y S_m(z) - x S_{m-1}(z) \big) \\
&&-  z \big( S^2_m(z) + S^2_{m-1}(z) \big) + 4 S_m(z) S_{m-1}(z).
\end{eqnarray*}

\subsection{Genus} \label{subsec:4.1}
By Lemma \ref{chev3} we have $\disp{\frac{T_m(z)-2}{z-2}=0}$ if and only if 
$\disp{z = 2 \cos \frac{2j\pi}{m}}$ for some $\disp{1 \le j \le \frac{m}{2}}$. 

Suppose $\disp{z = 2 \cos \frac{2j\pi}{m}}$ for some $\disp{1 \le j < \frac{m}{2}}$. 
Then, by Lemma \ref{chev1}, $$S_{m-1}(z)=\frac{\sin m \frac{2j\pi}{m}}{\sin \frac{2j\pi}{m}}=0, \quad \text{and} \quad S_m(z)=\frac{\sin (m+1)\frac{2j\pi}{m}}{\sin \frac{2j\pi}{m}}=1.$$
Hence $R(x,y,z) = - S_{n-1}(v)$, where $v = xy - z$. Then, by Lemma \ref{chev2}, $R(x,y,z)=0$ 
if and only if $\disp{v = 2\cos \frac{k\pi}{n}}$ for some $1 \le k \le n-1$.

Suppose $z=-2$ (in this case $m$ must be even). Then $S_{m-1}(z)=(-1)^{m-1}m = -m$ and $S_m(z)=(-1)^m(m+1)=m+1$. Hence $$R(x,y,z) = - \left( mS_n(v) + (m+1) S_{n-1}(v) \right)$$ where $v = xy+2 + (m^2+m)(x+y)^2$. 

We have shown that the following. Let
$$Y_{j,k} = \left\{ x,y,z \in \C \Bigm| z = 2 \cos \frac{2j\pi}{m}, \, xy -z =  2\cos \frac{k\pi}{n} \right\}.$$
Let $v_1 = xy+2 + (m^2+m)(x+y)^2$ and
$$Y=\left\{x,y,z \in \C \bigm| z =-2, \, mS_n(v_1) + (m+1) S_{n-1}(v_1)=0 \right\}.$$

\begin{proposition}\label{pro:4.1}  
For the double twist link $L=J(2m+1,2n+1)$ with $m, n \ge 1$ and orientation as in Figure 2, 
\begin{enumerate}
\item
If $m$ is odd, then $\deg \Delta_{L, \chi}(t) = 4n = 4g(L)$ on 
$$\{R(x,y,z)=0\} \setminus \bigcup_{\substack{1 \le j < m/2, \\ 1 \le k \le n-1}} 
Y_{j,k}.$$
\item
If $m$ is even, then $\deg \Delta_{L, \chi}(t) = 4n = 4g(L)$ on 
$$\{R(x,y,z)=0\} \setminus Y \setminus \bigcup_{\substack{1 \le j < m/2, \\ 1 \le k \le n-1}} 
Y_{j,k}.$$
\end{enumerate}
\end{proposition}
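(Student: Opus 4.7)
The plan is to combine the explicit leading-term formula derived in the preceding discussion with the Chebyshev identities of Lemmas~\ref{chev1}--\ref{chev3}, and then use the defining equation $R(x,y,z)=0$ of the non-abelian character variety to pin down the exceptional locus.

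First, I would observe that the computation above exhibits both the highest and lowest degree terms of $\Delta_{L,\rho}(t)$ as scalar multiples of $\frac{T_m(z)-2}{z-2}$, namely $t^{4n-2}\cdot \frac{T_m(z)-2}{z-2}$ and $t^{-2}\cdot \frac{T_m(z)-2}{z-2}$, where $z=\tr AB^{-1}$. Since $\Delta_{L,\chi}(t)$ is defined only up to multiplication by $t^{2k}$, its total degree equals $(4n-2)-(-2)=4n$ precisely when $\frac{T_m(z)-2}{z-2}\ne 0$, and is strictly smaller otherwise. Thus the proposition reduces to identifying, inside $\{R=0\}$, the locus where $\frac{T_m(z)-2}{z-2}$ vanishes.

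Next, Lemma~\ref{chev3} gives that $\frac{T_m(z)-2}{z-2}=0$ exactly when either $z=2\cos\frac{2j\pi}{m}$ for some $1\le j<\frac{m}{2}$, or $z=-2$ (the latter possible only when $m$ is even). In the first case, Lemma~\ref{chev1} yields $S_{m-1}(z)=0$ and $S_m(z)=1$, so the formula for $v$ collapses to $v=xy-z$ and $R(x,y,z)=-S_{n-1}(v)$; imposing $R=0$ forces $v=2\cos\frac{k\pi}{n}$ for some $1\le k\le n-1$ by Lemma~\ref{chev2}, so $(x,y,z)\in Y_{j,k}$. In the second case, $S_{m-1}(-2)=-m$ and $S_m(-2)=m+1$, and a direct expansion simplifies $v$ to $v_1$, while $R=0$ becomes $mS_n(v_1)+(m+1)S_{n-1}(v_1)=0$, placing $(x,y,z)\in Y$. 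Contrapositively, on the complement of all these loci we have $\frac{T_m(z)-2}{z-2}\ne 0$, giving $\deg\Delta_{L,\chi}(t)=4n$ as required; this proves both parts of the proposition at once, with the odd case automatically omitting $Y$ because $z=-2$ does not arise.

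The main obstacle will be the algebra in the $z=-2$ case: substituting into the full formula for $v$ and checking the cancellation $2((m+1)^2+m^2)-4m(m+1)=2$ so that the simplified expression matches $v_1=xy+2+(m^2+m)(x+y)^2$ exactly. The remaining steps are routine applications of the Chebyshev lemmas, with the symmetry between highest and lowest coefficients of $\Delta_{L,\rho}(t)$ providing the crucial reduction from a two-sided degree statement to the single non-vanishing condition $T_m(z)\ne 2$.
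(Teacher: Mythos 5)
Your proposal is correct and follows essentially the same route as the paper: both reduce the degree statement to the non-vanishing of the common extreme coefficient $\frac{T_m(z)-2}{z-2}$, then use Lemma~\ref{chev3} to split into the cases $z=2\cos\frac{2j\pi}{m}$ (where $S_{m-1}(z)=0$, $S_m(z)=1$ collapse $R$ to $-S_{n-1}(xy-z)$, yielding $Y_{j,k}$ via Lemma~\ref{chev2}) and $z=-2$ for even $m$ (yielding $Y$ after the simplification of $v$ to $v_1$, where your cancellation $2((m+1)^2+m^2)-4m(m+1)=2$ is exactly the check needed). No substantive differences from the paper's argument.
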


\subsection{Fiberedness}\label{subsec:4.2} Here we consider non-fibered links $L$. Then we have $m > 1$.

Since $T_k(q)-q=T_k(q)-2-(q-2)$, 
by Lemma \ref{chev4} we have $\disp{\frac{T_m(z)-2}{z-2} = 1}$ if and only if 
$\disp{z = 2 \cos \frac{2j\pi}{m-1}}$ for some $\disp{1 \le j \le \frac{m-1}{2}}$, 
or $\disp{z = 2 \cos \frac{2k\pi}{m+1}}$ 
for some $\disp{1 \le k \le \frac{m+1}{2}}$.

Suppose $\disp{z = 2 \cos \frac{2j\pi}{m-1}}$ for some $\disp{1 \le j < \frac{m-1}{2}}$. 
Then, by Lemma \ref{chev1}, $$S_{m-1}(z)= \frac{\sin m \frac{2j\pi}{m-1}}{\sin \frac{2j\pi}{m-1}} = 1, \quad \text{and} \quad S_m(z)= \frac{\sin (m+1) \frac{2j\pi}{m-1}}{\sin \frac{2j\pi}{m-1}} = z.$$ 
Hence $R(x,y,z) = S_n(v)- z S_{n-1}(v)$, where $v = xy - (x^2+y^2-3)z+xyz^2-z^3$. 

Suppose $\disp{z = 2 \cos \frac{2j\pi}{m+1}}$ for some $\disp{1 \le j < \frac{m+1}{2}}$. 
Then, by Lemma \ref{chev1}, $$S_{m-1}(z)= \frac{\sin m \frac{2j\pi}{m+1}}{\sin \frac{2j\pi}{m+1}} = -1, \quad \text{and} \quad S_m(z)= \frac{\sin (m+1) \frac{2j\pi}{m+1}}{\sin \frac{2j\pi}{m+1}} = 0.$$ 
Hence $R(x,y,z) = - S_n(v)$, where $v = xy - z$. Then, by Lemma \ref{chev2}, $R(x,y,z)=0$ if and only if 
$\disp{v = 2\cos \frac{k\pi}{n+1}}$ for some $1 \le k \le n$.

Suppose $z=-2$ (in this case $m$ must be odd). Then $S_{m-1}(z)=(-1)^{m-1}m = m$ and $S_m(z)=(-1)^m(m+1)=-(m+1)$. Hence $$R(x,y,z) = mS_m(v) + (m+1) S_{n-1}(v) $$ where $v = xy+2 + (m^2+m)(x+y)^2$. 

We have shown that the following. Let
$$Z_{j,k} = \left\{ x,y,z \in \C \Bigm| z = 2 \cos \frac{2j\pi}{m+1}, \, xy -z =  2\cos \frac{k\pi}{n+1} \right\}.$$
Let $v_2 =  xy - (x^2+y^2-3)z+xyz^2-z^3$ and let
$$Z_l = \left\{x,y,z \in \C \Bigm| z=2 \cos \frac{2l\pi}{m-1}, \, S_n(v_2)- z S_{n-1}(v_2)=0 \right\}.$$

\begin{proposition}\label{pro:4.2}
For the (non-fibered) double twist link $L=J(2m+1,2n+1)$ with $m > 1$, $n \ge 1$ and orientation as in Figure 2, 
\begin{enumerate}
\item
If $m$ is even, then $\Delta_{L, \chi}(t)$ is non-monic on 
$$
\{R(x,y,z)=0\} \setminus  \bigcup_{\substack{1 \le j < (m+1)/2, \\ 1 \le k \le n}} 
Z_{j,k} \setminus \bigcup_{1 \le l < (m-1)/2} Z_l.
$$
\item
If $m$ is odd, then $\Delta_{L, \chi}(t)$ is non-monic on 
$$
\{R(x,y,z)=0\} \setminus Y \setminus  \bigcup_{\substack{1 \le j < (m+1)/2, \\ 1 \le k \le n}} 
Z_{j,k} \setminus \bigcup_{1 \le l < (m-1)/2} Z_l .
$$
\end{enumerate}
\end{proposition}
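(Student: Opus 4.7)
The plan is to use the formula for the leading coefficient of $\Delta_{L,\chi}(t)$ derived at the start of this subsection: the coefficient of $t^{4n-2}$ equals $\frac{T_m(z)-2}{z-2}$, where $z = \tr AB^{-1}$. Thus $\chi$ is monic precisely when $\frac{T_m(z)-2}{z-2} = 1$, equivalently when $T_m(z) - z = 0$. The proof therefore reduces to listing the roots of $T_m(z) - z$ in $z$ and, for each of them, computing the resulting slice of $\{R(x,y,z)=0\}$.

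Applying Lemma \ref{chev4}, the roots of $T_m(z) - z$ consist of $z = 2$ (always), the values $z = 2\cos\frac{2l\pi}{m-1}$ and $z = 2\cos\frac{2j\pi}{m+1}$ in the ranges specified there, and additionally $z = -2$ when $m$ is odd. The root $z = 2$ must be handled separately since it cancels the denominator $z-2$; by L'H\^opital's rule (or directly from $T_m'(2) = m^2$, which follows from the recursion $T_{k+1}'(z) = T_k(z) + zT_k'(z) - T_{k-1}'(z)$) the leading coefficient at $z = 2$ equals $m^2$, and $m^2 \neq 1$ since $m > 1$. So no exclusion at $z = 2$ is required.

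For each remaining value of $z$ I substitute into the character-variety equation $R(x,y,z) = S_{m-1}(z)S_n(v) - S_m(z)S_{n-1}(v)$, evaluating $S_{m-1}(z)$ and $S_m(z)$ via Lemma \ref{chev1}. For $z = 2\cos\frac{2j\pi}{m+1}$ one computes $S_{m-1}(z) = -1$ and $S_m(z) = 0$, so $R = 0$ reduces to $S_n(v) = 0$ with $v = xy - z$; Lemma \ref{chev2} then identifies this vanishing set with the loci $Z_{j,k}$. For $z = 2\cos\frac{2l\pi}{m-1}$ one gets $S_{m-1}(z) = 1$ and $S_m(z) = z$, and $R = 0$ becomes $S_n(v_2) - z S_{n-1}(v_2) = 0$, the defining equation of $Z_l$. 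For $z = -2$ with $m$ odd, $S_{m-1}(-2) = m$ and $S_m(-2) = -(m+1)$, so $R = 0$ becomes $mS_n(v_1) + (m+1)S_{n-1}(v_1) = 0$, the defining equation of $Y$. Collecting these cases yields the two statements of the proposition.

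The argument closely parallels the genus-case calculation in Section \ref{subsec:4.1}. The only delicate point is the treatment of $z = 2$, where both numerator and denominator vanish but the residue $m^2 \neq 1$ keeps the leading coefficient away from $1$ without the need for an excluded locus. Everything else amounts to careful Chebyshev bookkeeping.
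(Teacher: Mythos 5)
Your proof is correct and follows essentially the same route as the paper: the leading coefficient is $\frac{T_m(z)-2}{z-2}$, Lemma \ref{chev4} locates where it equals one, and substituting each such $z$ into $R(x,y,z)$ (after evaluating $S_{m-1}(z)$ and $S_m(z)$ via Lemma \ref{chev1}) produces exactly the loci $Z_{j,k}$, $Z_l$ and $Y$. Your explicit check that $z=2$ only contributes the value $T_m'(2)=m^2\neq 1$ is the one point treated differently; the paper disposes of this case implicitly through the identity $T_m(q)-q=(T_m(q)-2)-(q-2)$ together with the simple factor $(q-2)$ appearing in Lemma \ref{chev4}.
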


We now consider another orientation of $L$ which is obtained from the one in Figure 2 by changing the orientation of the component  corresponding to the meridian $b$. With this orientation we have $g(L) = m$ and $L$ is fibered if and only if $n=1$. Moreover $\Phi(a) = t A$ and $\Phi(b) = t^{-1} B$. Then $\Phi(u) =  U$. The highest degree term (in $t$) of $\Delta_{L, \rho}(t)$ is 
\begin{eqnarray*}
\det \Phi\left( (b^{-1}a)^m (1+u+ \cdots +u^{n-1})\right) &=& (t^{2m})^2 \det \Phi (1+u+ \cdots + u^{n-1}) \\
&=& t^{4m} \, \frac{T_n(v)-2}{v-2}.
\end{eqnarray*}
Similarly, the lowest degree term of $\Delta_{L, \rho}(t)$ is 
$\disp{t^{0} \, \frac{T_n(v)-2}{v-2}}$. 

\subsection{Genus}\label{subsec:4.3} 
Recall that \begin{eqnarray*}
v &=& \big( x S_m(z) - y S_{m-1}(z) \big) \big( y S_m(z) - x S_{m-1}(z) \big) \\
&&-  z \big( S^2_m(z) + S^2_{m-1}(z) \big) + 4 S_m(z) S_{m-1}(z).
\end{eqnarray*}

A similar argument as in Subsection~\ref{subsec:4.1} shows the following. 
Let
$$Y'_{j,k} = \left\{ x,y,z \in \C \Bigm| xy -z = 2 \cos \frac{2j\pi}{n}, \,  z =  2\cos \frac{k\pi}{m} \right\}$$
and
$$Y'=\left\{x,y,z \in \C \bigm| v =-2, \, nS_m(z) + (n+1) S_{m-1}(z)=0 \right\}.$$

\begin{proposition} \label{pro:4.3}
For the double twist link $L=J(2m+1,2n+1)$ with $m, n \ge 1$ and orientation obtained from the one in Figure 2 by changing the orientation of the component  corresponding to the meridian $b$, 
\begin{enumerate}
\item
 If $n$ is odd, then $\deg \Delta_{L, \chi}(t) = 4m = 4g(L)$ on 
$$\{R(x,y,z)=0\} \setminus \bigcup_{\substack{1 \le j < n/2, \\ 1 \le k \le m-1}} Y'_{j,k}.$$
\item
If $n$ is even, then $\deg \Delta_{L, \chi}(t) = 4m = 4g(L)$ on 
$$\{R(x,y,z)=0\} \setminus Y' \setminus \bigcup_{\substack{1 \le j < n/2, \\ 1 \le k \le m-1}} 
Y'_{j,k}.
$$
\end{enumerate}
\end{proposition}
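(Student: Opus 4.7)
The plan is to mirror the proof of Proposition~\ref{pro:4.1}, swapping the roles of the pairs $(m,z)$ and $(n,v)$. Under the reversed orientation we have $\Phi(a)=tA$ and $\Phi(b)=t^{-1}B$, so $\Phi(u)=U$ is independent of $t$, and a direct computation of $\det\Phi(\partial w/\partial b)$ yields that the highest and lowest degree terms of $\Delta_{L,\rho}(t)$ are $t^{4m}\cdot\frac{T_n(v)-2}{v-2}$ and $t^{0}\cdot\frac{T_n(v)-2}{v-2}$ respectively (up to sign), as already noted in the paragraph preceding this subsection. Therefore $\deg\Delta_{L,\chi}(t)=4m$ exactly when the coefficient $(T_n(v)-2)/(v-2)$ is nonzero, and it suffices to identify, inside $\{R(x,y,z)=0\}$, the locus where this coefficient vanishes.

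The next step is to apply Lemma~\ref{chev3} to $T_n(v)-2$: the coefficient vanishes iff $v=2\cos\frac{2j\pi}{n}$ for some $1\le j\le n/2$. I would treat the generic and exceptional cases separately. For $1\le j<n/2$, Lemma~\ref{chev1} gives $S_{n-1}(v)=0$ and $S_n(v)=1$, so on $\{v=2\cos\tfrac{2j\pi}{n}\}$ the defining polynomial simplifies to $R(x,y,z)=S_{m-1}(z)$. By Lemma~\ref{chev2}, $R=0$ there iff $z=2\cos\tfrac{k\pi}{m}$ for some $1\le k\le m-1$. For such a $z$, Lemma~\ref{chev1} gives $S_{m-1}(z)=0$ and $S_m(z)=(-1)^k$, and plugging these values into the formula
\[
v=\bigl(xS_m(z)-yS_{m-1}(z)\bigr)\bigl(yS_m(z)-xS_{m-1}(z)\bigr)-z\bigl(S_m^2(z)+S_{m-1}^2(z)\bigr)+4S_m(z)S_{m-1}(z)
\]
collapses to $v=xy-z$. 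Hence the intersection $\{v=2\cos\tfrac{2j\pi}{n}\}\cap\{R=0\}$ coincides exactly with $Y'_{j,k}$ as $k$ ranges over $\{1,\dots,m-1\}$.

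For the exceptional index $j=n/2$, which requires $n$ even and corresponds to $v=-2$, Lemma~\ref{chev1} (with $v=-2$) gives $S_{n-1}(-2)=(-1)^{n-1}n=-n$ and $S_n(-2)=(-1)^n(n+1)=n+1$. Substituting into $R=S_{m-1}(z)S_n(v)-S_m(z)S_{n-1}(v)$ produces $R=(n+1)S_{m-1}(z)+nS_m(z)$, so the condition $R=0$ together with $v=-2$ is exactly the definition of $Y'$. Thus the total bad locus is $\bigcup_{j,k}Y'_{j,k}$ when $n$ is odd and this union together with $Y'$ when $n$ is even, which is precisely the statement of the proposition. Since $g(L)=m$ under this orientation (computable from the reduced Alexander polynomial and Murasugi's criterion), we conclude $\deg\Delta_{L,\chi}(t)=4m=4g(L)$ off the bad locus.

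The only step requiring care is the clean collapse of $v$ to $xy-z$ when $z=2\cos\tfrac{k\pi}{m}$; this is the counterpart, under the interchange of roles, of the clean identity $v=xy-z$ that appeared in Subsection~\ref{subsec:4.1} when $z=2\cos\tfrac{2j\pi}{m}$, and the computation is routine once the specific Chebyshev values are inserted. No new conceptual ingredient beyond Lemmas~\ref{chev1}--\ref{chev3} and the proof template of Proposition~\ref{pro:4.1} is needed.
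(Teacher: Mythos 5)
Your proposal is correct and follows exactly the route the paper intends: the paper's own "proof" of Proposition~\ref{pro:4.3} is just the remark that "a similar argument as in Subsection~\ref{subsec:4.1} shows the following," and your write-up is precisely that argument with the roles of $(m,z)$ and $(n,v)$ interchanged, using the degree computation already displayed before the subsection and Lemmas~\ref{chev1}--\ref{chev3}. The key simplifications you check ($S_{n-1}(v)=0$, $S_n(v)=1$ forcing $R=S_{m-1}(z)$, and $v$ collapsing to $xy-z$ when $S_{m-1}(z)=0$) are exactly what is needed to identify the exceptional loci $Y'_{j,k}$ and $Y'$.
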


\subsection{Fiberedness}\label{subsec:4.4} 

As for fiberedness of $J(2m+1,2n+1)$ with the same orientation as in Proposition~\ref{pro:4.3}, 
we have the following. 
Let
$$Z'_{j,k} = \left\{ x,y,z \in \C \Bigm| xy -z = 2 \cos \frac{2j\pi}{n+1}, \,  z =  2\cos \frac{k\pi}{m+1} \right\}$$
and
$$Z'_l = \left\{x,y,z \in \C \Bigm| v=2 \cos \frac{2l\pi}{n-1}, \, S_m(z)- v S_{m-1}(z)=0 \right\}.$$

\begin{proposition} \label{pro:4.4} 
For the non-fibered double twist link $L=J(2m+1,2n+1)$ with $m \ge 1$, $n > 1$ and orientation obtained from the one in Figure 2 by changing the orientation of the component  corresponding to the meridian $b$, 
\begin{enumerate}
\item
If $n$ is even, then $\Delta_{L, \chi}(t)$ is non-monic on 
$$
\{R(x,y,z)=0\} \setminus \bigcup_{\substack{1 \le j < (n+1)/2, \\ 1 \le k \le m}} 
Z'_{j,k} \setminus \bigcup_{1 \le l < (n-1)/2} 
Z'_l.
$$
\item
If $m$ is odd, then $\Delta_{L, \chi}(t)$ is non-monic on 
$$
\{R(x,y,z)=0\} \setminus Y' \setminus \bigcup_{\substack{1 \le j < (n+1)/2, \\ 1 \le k \le m}} 
Z'_{j,k} \setminus \bigcup_{1 \le l < (n-1)/2} 
Z'_l.
$$
\end{enumerate}
\end{proposition}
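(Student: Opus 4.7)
The plan is to mimic the strategy underlying Proposition~\ref{pro:4.2}, adapted to the orientation for which the coefficient of $t^{4m}$ in $\Delta_{L,\rho}(t)$ is $\frac{T_n(v)-2}{v-2}$ in place of $\frac{T_m(z)-2}{z-2}$. Since $\Delta_{L,\chi}(t)$ is monic precisely when this coefficient equals $1$, the task is to identify the $v$-values for which $\frac{T_n(v) - 2}{v - 2} = 1$, equivalently the roots of $T_n(v) - v$, and then intersect the resulting conditions on $(x,y,z)$ with the character variety $\{R(x,y,z) = 0\}$.

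By Lemma~\ref{chev4} the roots of $T_n(v) - v$ are $v = \pm 2$, $v = 2\cos\frac{2j\pi}{n+1}$ for $1 \le j < (n+1)/2$, and $v = 2\cos\frac{2l\pi}{n-1}$ for $1 \le l < (n-1)/2$. The root $v = 2$ is a common zero of numerator and denominator and contributes $T_n'(2) = n^2 \ne 1$, so it drops out. The root $v = -2$ satisfies $T_n(-2) = 2(-1)^n$, hence yields $\frac{T_n(v)-2}{v-2} = 1$ exactly when $n$ is odd; this is what produces the parity split between the two parts of the proposition.

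For each surviving value of $v$ I would evaluate $S_{n-1}(v)$ and $S_n(v)$ from Lemma~\ref{chev1}, substitute into $R(x,y,z) = S_{m-1}(z)S_n(v) - S_m(z)S_{n-1}(v)$, and read off the resulting condition. When $v = 2\cos\frac{2j\pi}{n+1}$ one has $S_n(v) = 0$ and $S_{n-1}(v) = -1$, so $R = 0$ forces $S_m(z) = 0$, i.e.\ $z = 2\cos\frac{k\pi}{m+1}$ for some $1 \le k \le m$. Lemma~\ref{chev0} then gives $S_{m-1}(z)^2 = 1$, and feeding this back into the trace formula for $v$ collapses it to $v = xy - z$, producing exactly the locus $Z'_{j,k}$. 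When $v = 2\cos\frac{2l\pi}{n-1}$ one has $S_{n-1}(v) = 1$ and $S_n(v) = v$, so $R = 0$ becomes $S_m(z) - v S_{m-1}(z) = 0$, which is the defining condition of $Z'_l$. When $v = -2$ (so $n$ is odd), $S_{n-1}(-2) = n$ and $S_n(-2) = -(n+1)$, so $R = 0$ becomes $n S_m(z) + (n+1) S_{m-1}(z) = 0$, cutting out the locus $Y'$.

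Putting it all together, at any point of $\{R(x,y,z) = 0\}$ lying outside the union of the listed loci, the coefficient of $t^{4m}$ in $\Delta_{L,\chi}(t)$ is not $1$, so $\Delta_{L,\chi}(t)$ is non-monic, as claimed. No step demands real ingenuity; the main obstacle is careful bookkeeping of parities and of the Chebyshev evaluations, especially the reduction $v = xy - z$ used in the $Z'_{j,k}$ case, which hinges on the auxiliary collapse $S_{m-1}(z)^2 = 1$ induced by $S_m(z) = 0$.
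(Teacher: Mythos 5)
Your proposal is correct and follows essentially the same route as the paper, which proves Proposition~\ref{pro:4.4} implicitly by transposing the computation of Subsection~\ref{subsec:4.2} (factor $T_n(v)-v$ via Lemma~\ref{chev4}, evaluate $S_{n-1}(v)$, $S_n(v)$ at each root, and substitute into $R$), with the $v=2\cos\frac{2j\pi}{n+1}$, $v=2\cos\frac{2l\pi}{n-1}$ and $v=-2$ cases producing $Z'_{j,k}$, $Z'_l$ and $Y'$ exactly as you describe. Your parity analysis of the root $v=-2$ also correctly identifies that part~(ii) concerns the case of odd $n$ (the statement's ``$m$ odd'' is a misprint).
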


The remaining two orientations of $L$ are similar. They are just mirror images of the previous two. In general, there are $2^\mu$ ways to orient a $\mu$-component link. Moreover, up to taking mirror images, there are only $2^{\mu-1}$ orientations to consider.

\begin{remark}\label{rmk:similar}
Similar results to Propositions \ref{pro:4.1}--\ref{pro:4.4} can be stated for all double twist links $J(2m+1, 2n+1)$, where $m, n$ are integers. However, for simplicity, we choose to  present only the case that $m, n \ge 1$.
\end{remark}

\subsection{Main theorem}\label{subsec:main theorem}

Note that $L=J(2m+1,2n+1)$ is hyperbolic if and only if $m, n \not\in \{-1,0\}$. 
Since 
the double twist link $L$ is alternating, the notions of the genus and the Thurston norm 
of $L$ are equivalent (see Remark~\ref{rmk:Alexander-Thurston}). 

We are now ready to show our main theorem of this paper. 

\begin{theorem} \label{hyp_thm}
For the (hyperbolic) double twist link $L=J(2m+1,2n+1)$ with $m, n \not\in \{-1,0\}$ and with any orientation, 
the twisted Alexander polynomial $\D_{L,\rho_0}(t)$ determines the genus $g(L)$ (or equivalently the 
Thurston norm). Moreover $L$ is fibered if and only if 
$\D_{L,\rho_0}(t)$ is monic. 
\end{theorem}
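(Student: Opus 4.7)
The plan is to deduce Theorem~\ref{hyp_thm} from Propositions~\ref{pro:4.1}--\ref{pro:4.4} by showing that the character $\chi_{\rho_0}$ of the holonomy lift avoids every exceptional locus appearing in those statements.

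First I would record the structural input: since $\rho_0$ is parabolic, both meridian traces satisfy $x=\tr\rho_0(a)=2$ and $y=\tr\rho_0(b)=2$, so $\chi_{\rho_0}$ lies on the parabolic slice $\{x=y=2\}$ of $X(L)$ and is determined by the single complex parameter $z_0:=\tr\rho_0(ab^{-1})$. By the standard $SL(2,\C)$ trace identity, $\tr\rho_0(ab)=4-z_0$; moreover, with $x=y=2$ fixed, the auxiliary polynomials $v$, $v_1$, $v_2$ appearing in the definitions of the loci reduce to polynomials in $z_0$ with integer coefficients.

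The crux of the argument is that $z_0$ is not real. This is the geometric input: the invariant trace field of the cusped hyperbolic $3$-manifold $E_L$ has a complex place, and for a $2$-bridge link this field is generated over $\Q$ by $z_0$; equivalently, a discrete faithful representation of $G(L)$ with all traces real would be conjugate into $SL(2,\R)$, which cannot carry the finite-volume hyperbolic structure of a link exterior. Each exceptional locus $Y_{j,k}$, $Y$, $Z_{j,k}$, $Z_l$, $Y'_{j,k}$, $Y'$, $Z'_{j,k}$, $Z'_l$ is cut out by one or more conditions forcing $z$ (or a real-coefficient polynomial expression in $z$) to equal a specific real value of the form $2\cos(p\pi/q)$ or $\pm2$. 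A short case check using $z_0\in\C\setminus\R$ --- or equivalently the fact that the corresponding elements $ab^{\pm1}$, $u$ of $G(L)$ are loxodromic under $\rho_0$, so their traces lie off $[-2,2]\subset\R$ --- excludes $\chi_{\rho_0}$ from every such locus.

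Granting this exclusion, Propositions~\ref{pro:4.1} and \ref{pro:4.2} yield $\deg\Delta_{L,\rho_0}(t)=4n=4g(L)$ together with the non-monic-versus-non-fibered characterization for the orientation of Figure~2 (where $g(L)=n$ and $L$ is fibered iff $m=1$); Propositions~\ref{pro:4.3} and \ref{pro:4.4} supply the analogous conclusion for the orientation reversing the component through $b$ (where $g(L)=m$ and $L$ is fibered iff $n=1$). The remaining two orientations are mirror images and yield the same statement. Combining with the fibered-implies-monic direction already known for arbitrary non-abelian representations, and with Remark~\ref{rmk:Alexander-Thurston} identifying the Thurston norm with a linear function of the genus for alternating links, completes the proof.

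The main obstacle I expect is the non-realness of $z_0$, together with the verification that the mixed-type loci such as $Y$ and $Y'$ (which impose simultaneous conditions on $z$ and on $v_1$ or $v$) are avoided; while non-realness of $z_0$ instantly rules out every purely $z$-type condition, the remaining loci require the additional observation that $v(z_0)\neq-2$ and $v_1\neq-2$, each of which reduces again to the loxodromic character of the corresponding group element under the faithful holonomy.
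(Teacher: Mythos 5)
Your overall strategy coincides with the paper's: reduce Theorem~\ref{hyp_thm} to Propositions~\ref{pro:4.1}--\ref{pro:4.4}, observe that $\chi_{\rho_0}=(2,2,z_0)$ with $z_0\notin\R$, and check that no exceptional locus contains a character of this form. For the loci cut out by a condition directly on $z$ (namely $Y_{j,k}$, $Y$, $Z_{j,k}$, $Z_l$, $Y'_{j,k}$, $Z'_{j,k}$), the exclusion is immediate from $z_0\notin\R$, exactly as the paper says. Where you diverge is in the two genuinely nontrivial cases, which are $Y'$ (cut out by $v=-2$) and $Z'_l$ (cut out by $v=2\cos\frac{2l\pi}{n-1}$), where $v=\tr\rho(u)$ is a nonlinear polynomial in $(x,y,z)$ so that a real value of $v$ does not by itself force $z$ real. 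The paper handles these algebraically (Lemmas~\ref{Z'_l} and~\ref{Y'}): using the defining equation of the locus together with Lemma~\ref{chev0} one solves for $z$ as a polynomial in $v,x,y$, and realness of $v$ plus $x=y=2$ forces $z\in\R$. You instead invoke the geometry of the holonomy, and there are two problems with that. First, you misidentify the hard loci: $Y$ is defined by $z=-2$ and is excluded trivially, whereas $Z'_l$ is one of the two cases needing work and your ``obstacle'' paragraph does not name the condition $v=2\cos\frac{2l\pi}{n-1}$ at all (your claim ``$v_1\neq -2$'' addresses nothing, since the locus $Y$ where $v_1$ appears is already killed by $z=-2$).

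Second, and more seriously, the loxodromy claim is asserted rather than proved, and in the $Y'$ case it does not even suffice as stated. For the discrete faithful $\overline{\rho}_0$, a nontrivial element is either loxodromic or parabolic, and it is parabolic exactly when it is peripheral; the word $u=(ba^{-1})^mba(b^{-1}a)^m$ is not a meridian, and you give no argument that it is non-peripheral. For $Z'_l$ this can be repaired cheaply: the forbidden values $2\cos\frac{2l\pi}{n-1}$ with $1\le l<(n-1)/2$ lie in the open interval $(-2,2)$, and a discrete torsion-free image has no elliptic elements, so $\tr\rho_0(u)\notin(-2,2)$ regardless of whether $u$ is peripheral. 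But for $Y'$ the forbidden value is $v=-2$, which is attained by $-I$ and by parabolic elements; excluding it requires showing either that $u$ is not peripheral or that under this particular lift a peripheral $u$ would have trace $+2$ rather than $-2$. Neither is addressed, so this step is a genuine gap. The paper's algebraic route via Lemma~\ref{Y'} (which yields $z=(n^2+n)(x+y)^2+xy+2$, hence $z=16n^2+16n+6\in\R$ when $x=y=2$) avoids the issue entirely, and I would recommend adopting it.
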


Note that the character of our lift $\rho_0:G(L)\to SL(2,\C)$ has the 
form $\chi_{\rho_0} = (2,2,z_0)$ for some $z_0 \not\in \R$ satisfying $R(2,2,z_0)=0$. 

As mentioned in Remark~\ref{rmk:similar}, 
we consider only the case $m,n \ge 1$. The other cases are similar. 
To prove Theorem \ref{hyp_thm}, by Propositions \ref{pro:4.1}--\ref{pro:4.4}, 
it suffices to show that neither $Y$, $Y'$, $Y_{j,k}$, $Y'_{j,k}$, $Z_l$, $Z'_l$, $Z_{j,k}$ nor $Z'_{j,k}$ contains characters of the form $(2,2,z)$ with $z \not\in \R$. Except the cases of $Y'$ and $Z'_l$, the other ones are clear. 

We now show these two cases.

\begin{lemma} \label{Z'_l}
If $(x,y,z) \in Z'_l$ then $\disp{v=2 \cos \frac{2l\pi}{n-1}\in\R}$ and
$$z=-v^3+v^2 x y-v (x^2+ y^2-3) +x y.$$
Hence, $Z'_l$ does not contain characters of the form $(2,2,z)$ with $z \not\in \R$.
\end{lemma}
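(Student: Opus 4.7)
The plan is to use the defining condition $S_m(z) - v\,S_{m-1}(z) = 0$ of $Z'_l$ together with the Chebyshev identity of Lemma \ref{chev0} to eliminate $S_m(z)$ and $S_{m-1}(z)$ from the expression for $v = \tr U$, thereby expressing $z$ as an explicit polynomial in $v, x, y$. Reality of $z$ at $(x,y) = (2,2)$ will then follow immediately since $v$ is real by definition of $Z'_l$.

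More precisely, from $S_m(z) = v\,S_{m-1}(z)$ and Lemma \ref{chev0} applied at $q = z$, $k = m$, we obtain
\[
S_{m-1}^2(z)\bigl(v^2 + 1 - vz\bigr) = 1,
\]
so that $S_{m-1}^2(z) = (v^2+1-vz)^{-1}$, $S_m(z)S_{m-1}(z) = v(v^2+1-vz)^{-1}$, and $S_m^2(z) + S_{m-1}^2(z) = (v^2+1)(v^2+1-vz)^{-1}$. Substituting these expressions, together with $S_m(z) = v\,S_{m-1}(z)$, into the formula
\[
v = (xS_m(z) - yS_{m-1}(z))(yS_m(z) - xS_{m-1}(z)) - z(S_m^2(z) + S_{m-1}^2(z)) + 4 S_m(z) S_{m-1}(z)
\]
and clearing the common denominator $v^2+1-vz$ yields, after expanding $(xv-y)(yv-x) = xy(v^2+1) - (x^2+y^2)v$, the polynomial identity
\[
v(v^2+1-vz) = xy(v^2+1) - (x^2+y^2)v - z(v^2+1) + 4v.
\]
Collecting the $z$-terms on one side gives $z = -v^3 + v^2 xy - v(x^2+y^2-3) + xy$, which is the formula stated in the lemma.

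Finally, the defining condition of $Z'_l$ forces $v = 2\cos\frac{2l\pi}{n-1} \in \R$. Setting $x = y = 2$ in the explicit formula yields $z = -v^3 + 4v^2 - 5v + 4 \in \R$, so no character of the form $(2,2,z)$ with $z \notin \R$ can lie in $Z'_l$. The only potential obstacle is the algebraic manipulation to pass from the rational expression for $v$ to the claimed polynomial in $z$, but this is routine once the three quadratic quantities in $S_{m-1}(z)$ above are in hand.
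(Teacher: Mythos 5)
Your proposal is correct and follows essentially the same route as the paper's own proof: both use the defining relation $S_m(z)=v\,S_{m-1}(z)$ together with Lemma \ref{chev0} to get $S_{m-1}^2(z)=(v^2+1-vz)^{-1}$, substitute into the trace formula for $v$, clear the denominator, and solve the resulting (linear in $z$) equation. Your version actually spells out the cancellation of the $v^2z$ terms more explicitly than the paper does, but the argument is the same.
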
 

\begin{proof}
Since $(x,y,z) \in Z'_l$, we have $S_m(z) = v S_{m-1}(z)$. 
Combining this with the equality $S^2_m(z) + S^2_{m-1}(z) - z S_m(z) S_{m-1}(z) =1$ 
in Lemma \ref{chev0}, we get $$S^2_{m-1}(v) = \frac{1}{v^2+1-zv}.$$ 
Hence
\begin{eqnarray*}
v  &=& \big( x S_m(z) - y S_{m-1}(z) \big) \big( y S_m(z) - x S_{m-1}(z) \big) \\
&&-  z \big( S^2_m(z) + S^2_{m-1}(z) \big) + 4 S_m(z) S_{m-1}(z) \\
&=& (4 v - v x^2 + x y + v^2 x y - v y^2 - z - v^2 z) S^2_{m-1}(v) \\
&=& \frac{4 v - v x^2 + x y + v^2 x y - v y^2 - z - v^2 z}{v^2-zv+1}.
\end{eqnarray*}
By solving for $z$ (in terms of $v$, $x$, $y$), we obtain the desired formula.
\end{proof}

\begin{lemma} \label{Y'}
If $(x,y,z) \in Y'$ then
$$z=(n^2+n)(x+y)^2 + xy+2.$$
Hence, $Y'$ does not contain characters of the form $(2,2,z)$ with $z \not\in \R$.
\end{lemma}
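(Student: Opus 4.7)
The strategy is essentially the same as in the proof of Lemma~\ref{Z'_l}: combine the two defining conditions of $Y'$ with the Chebyshev identity of Lemma~\ref{chev0} in order to eliminate $S_m(z)$ and $S_{m-1}(z)$ altogether, leaving a polynomial relation among $x$, $y$, $z$.

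Concretely, the plan is as follows. First I would rewrite the relation $n S_m(z) + (n+1) S_{m-1}(z) = 0$ as
$$S_m(z) = -\frac{n+1}{n}\, S_{m-1}(z),$$
and substitute this into the identity $S_m^2(z) + S_{m-1}^2(z) - z\,S_m(z) S_{m-1}(z) = 1$ of Lemma~\ref{chev0}. Solving the resulting equation yields
$$S_{m-1}^2(z) = \frac{n^2}{(2n^2+2n+1) + n(n+1)z}.$$
Next I would insert $S_m(z) = -\tfrac{n+1}{n} S_{m-1}(z)$ into the expression for $v$ in order to write
$$v = \frac{S_{m-1}^2(z)}{n^2}\Bigl[(2n^2+2n+1)(xy-z) + n(n+1)(x^2+y^2) - 4n(n+1)\Bigr],$$
after expanding $\big(xS_m - yS_{m-1}\big)\big(yS_m - xS_{m-1}\big)$ and collecting the $S_m^2 + S_{m-1}^2$ and $S_m S_{m-1}$ terms.

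Setting $v = -2$ and clearing the denominator using the formula for $S_{m-1}^2(z)$ above turns this into a linear equation in $z$. The coefficient of $z$ on the left side equals $(2n^2+2n+1) - 2n(n+1) = 1$, and the constants simplify as $-4n(n+1) + 2(2n^2+2n+1) = 2$, so one obtains
$$z = (2n^2+2n+1)\,xy + n(n+1)(x^2+y^2) + 2 = (n^2+n)(x+y)^2 + xy + 2,$$
which is the claimed formula. Because the right-hand side is a polynomial in $x, y$ with real coefficients, whenever $x, y$ are real (in particular when $x = y = 2$) the value of $z$ is automatically real. Hence no point of $Y'$ can have the form $(2,2,z)$ with $z \notin \R$.

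The only step that requires care is the algebraic simplification of $v$ under the substitution $S_m = -\tfrac{n+1}{n} S_{m-1}$: several terms must combine so that the resulting equation in $z$ is \emph{linear} (the apparently quadratic contribution in $z$ coming from $z(S_m^2 + S_{m-1}^2)$ cancels against the $z$-dependence hidden in $S_{m-1}^2(z)$). Once this cancellation is verified, solving for $z$ is immediate.
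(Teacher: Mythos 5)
Your proposal is correct and follows exactly the route the paper intends: the paper's proof of this lemma is simply the statement that it is ``similar to that of Lemma~\ref{Z'_l}'', and you carry out precisely that analogue --- eliminating $S_m(z)$ via $S_m(z)=-\tfrac{n+1}{n}S_{m-1}(z)$, using Lemma~\ref{chev0} to express $S_{m-1}^2(z)$, substituting into $v$, and solving the resulting linear equation in $z$; your intermediate identities and the final simplification $(2n^2+2n+1)xy+n(n+1)(x^2+y^2)+2=(n^2+n)(x+y)^2+xy+2$ all check out.
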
 

\begin{proof}
The proof of Lemma \ref{Y'} is similar to that of Lemma \ref{Z'_l}.
\end{proof}

In the view of Theorem~\ref{hyp_thm} and its proof, 
we may propose the following problem. 

\begin{problem}\label{problem:parabolic}
For an oriented hyperbolic $2$-bridge link $L$ and its parabolic representation 
$\rho:G(L)\to SL(2,\C)$, does the twisted Alexander polynomial 
$\D_{L,\rho}(t)$ determine the genus $g(L)$ and fiberedness of $L$?
\end{problem}

If Problem~\ref{problem:parabolic} has an affirmative answer, 
then Conjecture~\ref{conj:hyp} holds true for all hyperbolic $2$-bridge links. 
However as we will see in the next section, 
a part of Problem~\ref{problem:parabolic} has a negative answer.

\section{Parabolic representations}\label{section:parabolic}

Let us recall that 
a non-abelian representation $\rho:G(L)\to SL(2,\C)$ is parabolic if 
the images of all the meridians of $L$ by $\rho$ are matrices with the trace two. 

For positive integers $m$, $n$ and $p$, consider the 2-bridge link $C(2m,2n,-2p)$ in the Conway notation 
(see Figure~3), where positive numbers $2m,2n$ correspond to right-handed twists 
and negative number $-2p$ corresponds to left-handed twists. 
It is the rational link corresponding to the continued fraction $$[2m,2n,-2p] = 2m+\frac{1}{2n - \frac{1}{2p}}$$ 
and is the 2-bridge link $\big( 2m(4np-1)+2p, 4np-1 \big)$ in the Schubert notation. 
Note that $C(2m,2n,-2p)$ is a hyperbolic link.

\begin{theorem}\label{thm:main} 
For the $2$-bridge link $L=C(2m,2n,-2p)$, with $m, n,p$ being odd positive integers and $m \not= p$, 
the twisted Alexander polynomial $\D_{L,\rho_0}(t)$ detects the genus of $L$. Moreover,
\begin{enumerate}
\item
If $\gcd(m,p) = 1$, then all parabolic representations detect $g(L)$.
\item
If $\gcd(m,p) \ge 3$, then not all parabolic representations detect $g(L)$. 
\end{enumerate}
\end{theorem}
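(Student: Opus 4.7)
The plan is to reduce the theorem to an explicit analysis of parabolic characters and the highest-degree coefficient of $\Delta_{L,\rho}(t)$, exactly parallel to the Chebyshev bookkeeping carried out in Section~\ref{sec:examples}, and then to extract the arithmetic condition on $\gcd(m,p)$ from a common-root computation between the Riley polynomial and a product of two Chebyshev factors.

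First I would fix a standard two-bridge presentation $G(L) = \la a, b \mid aw = wa \ra$ coming from the continued fraction $[2m, 2n, -2p]$, where $a$ and $b$ are meridians and $w$ is an explicit alternating product of blocks corresponding to the three twist regions. Parabolic representations are parameterized by $A = \rho(a) = \bigl(\begin{smallmatrix} 1 & 1 \\ 0 & 1 \end{smallmatrix}\bigr)$ and $B = \rho(b) = \bigl(\begin{smallmatrix} 1 & 0 \\ -u & 1 \end{smallmatrix}\bigr)$, so that $\chi_\rho = (2, 2, z)$ with $z = 2 + u$. The relation $aw = wa$ collapses to a single polynomial equation $\Lambda_L(u) = 0$, the Riley polynomial, whose roots cut out the parabolic character locus on $X(L)$.

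Next, via Fox calculus on the Wirtinger presentation with $\Phi(a) = tA$, $\Phi(b) = tB$, I would compute $\D_{L,\rho}(t) = \det \Phi(\p w / \p b)$ and, using Lemma~\ref{chev5} to convert powers of $A, B, AB^{-1}$ to Chebyshev expressions in traces, isolate the highest-degree coefficient as a function of $z$. The expectation, by analogy with the double twist link computation, is that the $2n$-block contributes the global factor $t^{4g(L)}$ together with a nonvanishing middle term, while the two outer twist regions of lengths $2m$ and $2p$ contribute exactly
\[
B_L(z) \;=\; \frac{T_m(z) - 2}{z - 2} \cdot \frac{T_p(z) - 2}{z - 2},
\]
so that $\deg \D_{L,\chi}(t) = 2g(L)$ whenever $B_L(z) \neq 0$, and a parabolic character $\chi = (2,2,z)$ fails to detect the genus precisely when $\Lambda_L(z - 2) = B_L(z) = 0$.

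The key combinatorial step is then to analyze common roots of $B_L$ and $\Lambda_L$. By Lemma~\ref{chev3}, since $m$ and $p$ are odd, the roots of $B_L(z)$ are $z = 2\cos(2j\pi/m)$ for $1 \le j \le (m-1)/2$ together with $z = 2\cos(2k\pi/p)$ for $1 \le k \le (p-1)/2$. Writing $z = v + v^{-1}$, a \emph{simultaneous} root of the two factors corresponds to $v$ with $v^{2m} = v^{2p} = 1$ and $v \neq \pm 1$, i.e.\ a primitive $2\gcd(m,p)$-th root of unity of order at least $3$. This set is empty when $\gcd(m,p) = 1$ and nonempty when $\gcd(m,p) \ge 3$ (using that the gcd is odd). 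I would then verify that at each such multiply-rooted $z$ the Riley polynomial $\Lambda_L$ automatically vanishes: this is the crucial identity, and it should follow by substituting the special values $S_m(z) = 0$, $S_{m-1}(z) = \pm 1$ (and the analogous identities for $p$) into the explicit recursion for $\Lambda_L$, causing the defining equation to reduce to $0 = 0$. Finally, since the character $\chi_{\rho_0} = (2, 2, z_0)$ of any lift of the holonomy has $z_0 \notin \R$ (hyperbolicity), and every root of $B_L$ is real, the point $z_0$ automatically avoids $B_L = 0$; thus $\D_{L, \rho_0}(t)$ detects $g(L)$ regardless of $\gcd(m,p)$, giving the first assertion.

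The main obstacle will be the explicit factorization of $B_L(z)$: one must verify carefully, using the two-sided symmetry of $w$ and Chebyshev identities, that no stray factors from the $2n$-block contaminate the leading coefficient, and that the two outer twist contributions really enter as the product displayed above. A secondary subtlety is showing that a shared root of $\Lambda_L$ and $B_L$ corresponds to a genuine irreducible nonabelian parabolic representation (not a reducible degeneration), for which the hypothesis $m \neq p$ together with the ranges $1 \le j \le (m-1)/2$ and $1 \le k \le (p-1)/2$ should suffice.
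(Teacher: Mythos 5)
Your overall strategy (Riley polynomial $+$ leading-coefficient analysis via Lemma~\ref{chev5} $+$ a gcd criterion for common roots of Chebyshev polynomials) is the same as the paper's, and your combinatorial observation that common roots exist iff $\gcd(m,p)\ge 3$ for odd $m,p$ is the right arithmetic engine. But there is a genuine gap in the step you yourself flag as "the main obstacle": the leading coefficient is \emph{not} $\frac{T_m(z)-2}{z-2}\cdot\frac{T_p(z)-2}{z-2}$, and the $2n$-block is \emph{not} an innocuous nonvanishing middle term. The actual computation (with the orientation of Figure~3, where $g(L)=1$) gives the coefficient of the extreme terms as the triple product
$$\frac{T_p(\bar{u})-2}{\bar{u}-2} \cdot \frac{T_n(\bar{v})-2}{\bar{v}-2} \cdot \frac{T_m(z)-2}{z-2},$$
where $\bar{v}=\tr\rho\big((a^{-1}b)^m(ab^{-1})^m\big)$ and $\bar{u}=\tr\rho\big(a^{-1}v^{-n}bv^{n}\big)$ are nontrivial polynomials in $z$. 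So two of the three factors live in the auxiliary traces $\bar v,\bar u$ rather than in $z$, your proposed $B_L(z)$ misidentifies the $p$-factor's variable, and your root-of-unity analysis ($v^{2m}=v^{2p}=1$) does not apply to the actual obstruction locus. Ruling out the $\bar v$- and $\bar u$-factors requires separate arguments on the parabolic locus: on the Riley variety one shows that simultaneous vanishing of the $\bar v$-factor forces $S_{p-m-1}(z)=0$, hence $z\in\R$ and $\bar v = 2+(z-2)^2S_{m-1}^2(z)\ge 2$, contradicting $S_{n-1}(\bar v)=0$ (this is where the hypothesis $m\neq p$ is used); and vanishing of the $\bar u$-factor forces $S_{m-1}(z)=S_{p-1}(z)=0$, impossible when $\gcd(m,p)=1$. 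Only after these reductions does the problem collapse to the case where the $m$-factor vanishes, where the Riley polynomial becomes $S_{p-1}(z)$ and your gcd criterion (via Lemma~\ref{gcd}) finishes parts (i) and (ii). Your reality argument for the holonomy lift is also incomplete for the same reason: the zero loci of the $\bar u$- and $\bar v$-factors are not manifestly real in $z$, and their reality must be extracted from the Riley equation as above.

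A secondary omission: the genus of a link depends on the orientation, and the theorem must be checked for the other orientation class as well (where $g(L)=m+p-1$ and the leading coefficient is $\frac{T_{n-1}(\bar v)-2}{\bar v-2}$, handled by the same $\bar v\ge 2$ argument). Your proposal addresses only one orientation.
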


\begin{remark}\label{rmk:knot}
We do not know if there is a parabolic representation of a $2$-bridge \textit{knot} $K$ which does not detect the 
genus of $K$ (see \cite[Theorem~1.2]{MT}). 
\end{remark}

\subsection{Chebyshev polynomials (2)}
In this subsection we prepare two lemmas for the Chebyshev polynomials. 

\begin{lemma}\label{chev6}
For integers $k,l$ we have
\begin{equation} \label{equality}
S_k(q) S_{l-1}(q) - S_{k-1}(q) S_l(q) = S_{l-k-1}(q).
\end{equation}
\end{lemma}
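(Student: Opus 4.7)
My plan is to reduce the identity to a Laurent-polynomial identity in a formal variable $v$ via the substitution $q = v + v^{-1}$, which linearizes the recursion. By Lemma \ref{chev1}, for $v \ne \pm 1$ we have
$$S_j(q) \;=\; \frac{v^{j+1} - v^{-(j+1)}}{v - v^{-1}}$$
for every integer $j$. Substituting this into the left-hand side $S_k(q)S_{l-1}(q) - S_{k-1}(q)S_l(q)$, the denominator becomes $(v-v^{-1})^2$, and expanding the numerator gives four products of differences of powers of $v$. The key observation is that the two ``outer'' monomials $v^{\pm(k+l+1)}$ appear with opposite signs in the two products and therefore cancel, leaving only monomials depending on $l-k$. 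Collecting and factoring these monomials yields $(v-v^{-1})(v^{l-k} - v^{-(l-k)})$, so the quotient is precisely $\frac{v^{l-k}-v^{-(l-k)}}{v-v^{-1}} = S_{l-k-1}(q)$. Since both sides of \eqref{equality} are polynomials in $q$ with integer coefficients and the identity holds on the Zariski-dense set of values $q = v + v^{-1}$ with $v\neq \pm 1$, it holds as a polynomial identity.

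An alternative inductive route is to fix $k$ and argue by induction on $l$. Setting $f(l) := S_k(q) S_{l-1}(q) - S_{k-1}(q) S_l(q)$, the defining recurrence $S_{j+1} = q S_j - S_{j-1}$ applied to the $l$-indexed factors gives
$$f(l+1) - q\,f(l) + f(l-1) \;=\; S_k\bigl(S_l - q S_{l-1} + S_{l-2}\bigr) - S_{k-1}\bigl(S_{l+1} - q S_l + S_{l-1}\bigr) \;=\; 0,$$
so $f(l)$ obeys the same three-term recurrence in $l$ as $S_{l-k-1}(q)$. The two base cases $l=k$ and $l=k+1$ then yield $f(k) = 0 = S_{-1}(q)$ and $f(k+1) = S_k(q)^2 - S_{k-1}(q) S_{k+1}(q) = 1 = S_0(q)$, the latter being exactly Lemma \ref{chev0} once $S_{k+1}$ is expanded via the recurrence. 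Negative $l$ (and the symmetric direction $l < k$) are handled by the same recurrence together with the convention $S_{-j}(q) = -S_{j-2}(q)$, which is forced by the recursion.

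The only genuine computation is the cancellation of the outer monomials in the first approach, or equivalently the invocation of Lemma \ref{chev0} in the second; neither presents a real obstacle. I would present the first route because it yields the identity in one line once the substitution is made, while keeping the inductive proof in reserve in case one wishes to avoid the Laurent-series formalism and stay within the polynomial ring $\mathbb{Z}[q]$.
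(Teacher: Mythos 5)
Your primary route is exactly the paper's proof: substitute $q = v + v^{-1}$, use the closed form from Lemma \ref{chev1}, observe that the $v^{\pm(k+l+1)}$ terms cancel, and reduce the remainder to $\frac{v^{l-k}-v^{-(l-k)}}{v-v^{-1}} = S_{l-k-1}(q)$, extending to all $q$ by density (the paper phrases this as ``it suffices to show the identity for $q \neq \pm 2$''). Your inductive alternative is also sound, but the argument you would actually present is essentially identical to the published one.
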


\begin{proof}
It suffices to show \eqref{equality} for $q \not= \pm 2$. 
When $q \not= \pm 2$, we write $q = v+v^{-1}$ for some $v \not= \pm 1$. 
By Lemma \ref{chev1}, the LHS of \eqref{equality} is equal to
\begin{eqnarray*}
&& \frac{v^{k+1} - v^{-(k+1)}}{v -  v^{-1}}
\cdot \frac{v^{l} - v^{-l}}{v -  v^{-1}} - \frac{v^{k} - v^{-k}}{v -  v^{-1}} 
\cdot \frac{v^{l+1} - v^{-(l+1)}}{v -  v^{-1}} \\
&=& \frac{-(v^{k+1-l} + v^{l-(k+1)}) + (v^{k-(l+1)} + v^{l+1-k })}{(v -  v^{-1})^2} \\
&=& \frac{v^{l-k} - v^{k-l}}{v -  v^{-1}}
\end{eqnarray*}
which is also equal to the RHS of \eqref{equality}.
\end{proof}

\begin{lemma} \label{gcd}
For integers $k,l$ we have
$$\gcd(S_{k-1}(q), S_{l-1}(q)) = S_{\gcd(k,l)-1}(q).$$
\end{lemma}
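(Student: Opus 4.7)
The plan is to run a Euclidean-algorithm argument on the indices, using Lemma \ref{chev6} as the key step-down identity and Lemma \ref{chev0} to guarantee that consecutive Chebyshev polynomials are coprime. Throughout, ``$\gcd$'' is understood in $\Z[q]$ (equivalently up to units), and using $S_{-1}(q)=0$ together with the symmetry $S_{-k-1}(q)=-S_{k-1}(q)$ coming from the recurrence, I may reduce immediately to the case $1\le k\le l$.

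The core claim is the Euclidean step
$$\gcd\bigl(S_{k-1}(q),\,S_{l-1}(q)\bigr)=\gcd\bigl(S_{k-1}(q),\,S_{l-k-1}(q)\bigr).$$
To prove it, I rewrite Lemma \ref{chev6} as $S_{l-k-1}(q)=S_k(q)S_{l-1}(q)-S_{k-1}(q)S_l(q)$. If a polynomial $d$ divides both $S_{k-1}$ and $S_{l-1}$, then this identity immediately gives $d\mid S_{l-k-1}$. For the converse, if $d\mid S_{k-1}$ and $d\mid S_{l-k-1}$, the identity gives $d\mid S_k(q)S_{l-1}(q)$. At this point I invoke Lemma \ref{chev0}, which reads $S_k^2+S_{k-1}^2-qS_kS_{k-1}=1$: this is a B\'ezout relation showing $\gcd(S_{k-1},S_k)=1$ in $\Z[q]$. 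Since $d\mid S_{k-1}$, I conclude $\gcd(d,S_k)=1$, and therefore $d\mid S_{l-1}$. Thus the two gcds coincide.

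With the step in hand, I induct on $k+l$. If $l=k$ the result is trivial (both sides equal $S_{k-1}(q)$). Otherwise, having $\gcd(S_{k-1},S_{l-1})=\gcd(S_{k-1},S_{l-k-1})$ mirrors the identity $\gcd(k,l)=\gcd(k,l-k)$ at the index level, and after swapping arguments when necessary to keep the smaller index on the left, the procedure terminates exactly as the Euclidean algorithm for $\gcd(k,l)$ does. The terminal case occurs when one index reduces to $0$, i.e.\ one computes $\gcd(S_{d-1}(q),S_{-1}(q))=\gcd(S_{d-1}(q),0)=S_{d-1}(q)$ where $d=\gcd(k,l)$, which is the desired conclusion.

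I do not expect a serious obstacle: the only delicate point is the coprimality of consecutive $S_k$'s used in the converse direction of the Euclidean step, but Lemma \ref{chev0} supplies precisely the B\'ezout identity needed. The bookkeeping for signs when reducing negative indices to positive ones via $S_{-k-1}(q)=-S_{k-1}(q)$ is minor and affects gcds only up to a unit.
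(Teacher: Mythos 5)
Your proof is correct, but it takes a genuinely different route from the paper's. The paper's argument is a one-line reduction: writing $q=v+v^{-1}$ and using Lemma~\ref{chev1} to express $S_{k-1}(q)=(v^{k}-v^{-k})/(v-v^{-1})$, it invokes the identity $\gcd(v^{k}-v^{-k},\,v^{l}-v^{-l})=v^{\gcd(k,l)}-v^{-\gcd(k,l)}$ (the Laurent analogue of $\gcd(x^k-1,x^l-1)=x^{\gcd(k,l)}-1$) in $\Z[v^{\pm1}]$ and translates back. You instead run the Euclidean algorithm entirely inside $\Z[q]$: Lemma~\ref{chev6} gives the step $\gcd(S_{k-1},S_{l-1})=\gcd(S_{k-1},S_{l-k-1})$, and Lemma~\ref{chev0}, rewritten as $S_k(S_k-qS_{k-1})+S_{k-1}\cdot S_{k-1}=1$, is exactly the B\'ezout relation that justifies the converse inclusion of common divisors (if $d\mid S_{k-1}$ and $d\mid S_kS_{l-1}$, multiplying that relation by $S_{l-1}$ gives $d\mid S_{l-1}$ with no appeal to unique factorization). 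What your approach buys is self-containedness: it never leaves the ring $\Z[q]$, so there is no need to justify that a gcd computed in the larger ring $\Z[v^{\pm1}]$ descends to the subring generated by $q=v+v^{-1}$ --- a point the paper's sketch leaves implicit. What the paper's approach buys is brevity and a direct appeal to a standard fact. Your handling of the boundary cases ($S_{-1}=0$, the sign symmetry $S_{-k-1}=-S_{k-1}$, gcd taken up to units) is also sound, and the induction terminates exactly as the integer Euclidean algorithm does.
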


\begin{proof}
The lemma follows from the fact that 
$$\gcd(v^k-v^{-k}, v^l - v^{-l}) = v^{\gcd(k,l)} - v^{-\gcd(k,l)}$$
and Lemma \ref{chev1}.
\end{proof}

\subsection{Two bridge links $C(2m,2n,-2p)$}\label{subsec:C(2m,2n,-2p)}

In this subsection we prove Theorem~\ref{thm:main}. 
To this end we first give a presentation of the link group of $C(2m,2n,-2p)$. 

\begin{figure}\label{fig:3}
\setlength{\unitlength}{0.09mm}
\thicklines{
\begin{picture}(300,560)(385,-20)
\put(-130,250){\line(1,0){32}}
\put(-130,350){\line(1,0){33}}
\put(-97,250){\line(3,4){30}}
\put(-53,310){\line(3,4){30}}
\put(-22,250){\line(3,4){30}}
\put(24,310){\line(3,4){30}}
\put(-96,349){\line(3,-4){74}}
\put(-21,350){\line(3,-4){74}}
\put(146,250){\line(3,4){30}}
\put(190,310){\line(3,4){30}}
\put(146,350){\line(3,-4){75}}
\put(220,250){\line(3,4){30}}
\put(265,310){\line(3,4){30}}
\put(220,350){\line(3,-4){75}}
\put(65,294){{\tiny $\bullet~\bullet~\bullet$}}
\put(296,250){\line(1,0){31}}
\put(296,350){\line(1,0){502}}
\put(-130,450){\line(1,0){1345}}
\put(766,250){\line(1,0){32}}
\put(798,250){\line(3,4){74}}
\put(875,250){\line(3,4){74}}
\put(798,349){\line(3,-4){30}}
\put(844,290){\line(3,-4){30}}
\put(875,350){\line(3,-4){30}}
\put(922,290){\line(3,-4){30}}
\put(1036,250){\line(3,4){75}}
\put(1036,350){\line(3,-4){30}}
\put(1080,290){\line(3,-4){30}}
\put(1110,250){\line(3,4){75}}
\put(1110,350){\line(3,-4){30}}
\put(1158,290){\line(3,-4){30}}
\put(1188,250){\line(1,0){29}}
\put(1185,350){\line(1,0){31}}
\put(960,294){{\tiny $\bullet~\bullet~\bullet$}}
\put(-130,0){\line(0,1){250}}
\put(-130,350){\line(0,1){100}}
\put(325,100){\line(0,1){150}}
\put(325,100){\line(1,0){25}}
\put(350,0){\line(3,4){74}}
\put(425,0){\line(3,4){74}}
\put(348,99){\line(3,-4){30}}
\put(394,40){\line(3,-4){30}}
\put(425,100){\line(3,-4){30}}
\put(471,40){\line(3,-4){30}}
\put(591,0){\line(3,4){75}}
\put(591,100){\line(3,-4){30}}
\put(634,40){\line(3,-4){30}}
\put(665,0){\line(3,4){75}}
\put(665,100){\line(3,-4){30}}
\put(708,40){\line(3,-4){30}}
\put(767,100){\line(0,1){150}}
\put(738,100){\line(1,0){30}}
\put(513,44){{\tiny $\bullet~\bullet~\bullet$}}
\put(1216,0){\line(0,1){250}}
\put(1216,350){\line(0,1){100}}
\put(-131,0){\line(1,0){480}}
\put(738,0){\line(1,0){479}}
\put(450,125){{\small $2n$ crossings}}
%
\put(510,341){$\blacktriangleright$}
\put(510,441){$\blacktriangleleft$}
\put(110,-10){$\blacktriangleright$}
\put(975,-10){$\blacktriangleright$}
%
\put(1000,500){\line(0,-1){40}}
\put(1000,440){\line(0,-1){40}}
\put(985,385){$\blacktriangledown$}
\put(960,390){$a$}
\put(0,50){\line(0,-1){40}}
\put(0,-10){\line(0,-1){40}}
\put(-14,45){$\blacktriangle$}
\put(20,43){$b$}
\put(-120,360){{\small $a_0$}}
\put(-125,260){{\small $b_0$}}
\put(300,360){{\small $a_m$}}
\put(300,260){{\small $b_m$}}
\put(760,364){{\small $e_p$}}
\put(760,264){{\small $f_p$}}
\put(1178,360){{\small $e_0$}}
\put(1190,262){{\small $f_0$}}
\put(335,112){{\small $c_0$}}
\put(315,10){{\small $d_0$}}
\put(725,112){{\small $c_n$}}
\put(745,10){{\small $d_n$}}

\put(0,200){{\small $2m$ crossings}}
\put(880,200){{\small $-2p$ crossings}}
\end{picture}
}
\caption{The $2$-bridge link $L=C(2m,2n,-2p)$ with $m,n,p \ge 1$ and the generators of $G(L)$.}
\end{figure}

\begin{proposition}
The link group of $L=C(2m,2n,-2p)$ has a presentation 
$$G(L) = \la a,b \mid  aw = wa\ra$$
where $$w= (b^{-1}a)^m \left[ a^{-1} \left( (a^{-1}b)^m(ab^{-1})^m \right)^{-n} b \left( (a^{-1}b)^m(ab^{-1})^m \right)^{n} \right]^p$$ and 
$a$ and $b$ are meridians of $L$ depicted in Figure~3. 
\end{proposition}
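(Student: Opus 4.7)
The plan is to derive the presentation directly from the Wirtinger presentation associated with the diagram in Figure~3, systematically eliminating all auxiliary arc generators using the relations produced by the three twist regions. This is completely analogous to the derivation of the presentation of $G(J(2m+1,2n+1))$ from \cite[Lemma~3.2]{Petersen-Tran} recalled in Section~\ref{sec:examples}, except that now there are three twist regions to process rather than two.

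First, label the arcs of Figure~3 by $a_0,\dots,a_m,b_0,\dots,b_m$ in the left $2m$-twist region, $c_0,\dots,c_n,d_0,\dots,d_n$ in the middle $2n$-twist region, and $e_0,\dots,e_p,f_0,\dots,f_p$ in the right $-2p$-twist region, with $a$ and $b$ the bridge meridians as indicated in the figure. In each twist region the Wirtinger relations at successive crossings iterate a conjugation by a fixed word: a routine induction shows that in a right-handed $2k$-twist block joining incoming arcs $(x_0,y_0)$ to outgoing arcs $(x_k,y_k)$, each of $x_k$ and $y_k$ is a conjugate of $x_0$ (respectively $y_0$) by a power of $y_0^{-1}x_0$ (up to a convention depending on orientations), and in a left-handed block the analogous formula holds with the exponent negated. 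Applying these formulas to the three twist regions of $L$, and then using the identifications at the interfaces between regions (for instance, $a_m$ is glued to one of the $c_j$'s or $d_j$'s, and $c_n,d_n$ are glued to $e_p,f_p$), every arc generator can be written explicitly as a word in $a$ and $b$.

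Once this substitution is carried out, all Wirtinger relations except one become tautologies, and the surviving relation---corresponding to the closure of the tangle at the top of the diagram---takes the form $awa^{-1}=w$. Reading off the three substitutions, the initial factor $(b^{-1}a)^m$ in $w$ is contributed by the left $2m$-twist region; the outer $p$-th power is contributed by the $-2p$-twist region on the right; and the conjugating word $\big((a^{-1}b)^m(ab^{-1})^m\big)^n$ inside the bracket is the word by which a meridian of the right block is conjugated when one expresses it in terms of $a$ and $b$ after traversing the middle $2n$-twist region together with the left region. The main obstacle is purely combinatorial bookkeeping: tracking the signs of conjugations, the direction of each twist, and the correct matching of arcs at the interfaces. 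There is no conceptual subtlety; the computation is a finite mechanical one that can be streamlined by treating each twist block as a rational $2$-tangle and composing the three blocks according to the Conway continued fraction $[2m,2n,-2p]$.
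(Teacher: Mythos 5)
Your proposal follows essentially the same route as the paper's proof: applying the Wirtinger algorithm to each of the three twist regions, using induction within each block to express the outgoing arcs as conjugates of the incoming ones by powers of a fixed word (e.g.\ $a_m=(a_0^{-1}b_0)^m a_0(b_0^{-1}a_0)^m$ and the analogous formulas for the $2n$- and $-2p$-blocks), gluing via the interface identifications (in the paper, $e_0=a$ and $f_0=d_n$), and reducing to the single surviving relation $aw=wa$. The level of detail you leave to ``routine bookkeeping'' is comparable to what the paper itself omits, so this is correct and matches the published argument.
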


\begin{proof}
By applying the Wirtinger algorithm to the leftmost twist region in Figure~3 
and by induction we have
$$a_m = (a_0^{-1}b_0)^m a_0 (b_0^{-1}a_0)^m, \quad b_m = (a_0^{-1}b_0)^m b _0(b_0^{-1}a_0)^m.
$$
Similarly, the middle and rightmost twist regions give
\begin{eqnarray*}
c_n &=& (d_0 c_0^{-1})^n c_0 (c_0 d_0^{-1})^n, \quad d_n = (d_0 c_0^{-1})^n d_0 (c_0 d_0^{-1})^n,\\
e_p &=& (e_0^{-1}f_0)^p e_0 (f_0^{-1} e_0)^p, \quad f_p = (e_0^{-1}f_0)^p f_0 (f_0^{-1} e_0)^p.
\end{eqnarray*}
We have $G(L) = \la a,b \mid a_m = e_p \ra$. Since $e_0 = a$ and $f_0 = d_n$, the relation $a_m = e_p$ is equivalent to $aw = wa$ where
$$w = (b^{-1}a)^m(a^{-1}d_n)^p.$$
Finally, since $d_n = (b b_m^{-1})^n b (b_m b^{-1})^n$ and $b_m = (a^{-1}b)^m b(b^{-1}a)^m$ we obtain the desired presentation of $G(L)$.
\end{proof}

Let $\rho:G(L)\to SL(2,\C)$ be a representation and 
$r=awa^{-1}w^{-1}$. We have 
$$\frac{\partial r}{\partial b} 
= a \left(\frac{\partial w}{\partial b} - wa^{-1}w^{-1} \frac{\partial w}{\partial b}\right) 
= a (1 - wa^{-1}w^{-1})\frac{\partial w}{\partial b}.$$
Hence 
$\disp{\Delta_{L,\rho} (t)= \det \Phi\left(\frac{\partial r}{\partial b}\right) 
\big/ \det \Phi(1-a) = \det \Phi\left(\frac{\partial w}{\partial b}\right)}$.

For $k \ge 0$ and $h \in G(L)$, let $\delta_k(h) = 1 + h + \cdots +h^{k}$. Let $v = (a^{-1}b)^m(ab^{-1})^m$ and $u = a^{-1} v^{-n} b v^n$. Then $w=(b^{-1}a)^m u^p $. 
 We have
\begin{eqnarray*}
\frac{\partial w}{\partial b} &=& \delta_{m-1}(b^{-1}a)(-b^{-1}) + (b^{-1}a)^m \delta_{p-1}(u) \frac{\partial u}{\partial b}, \\
\frac{\partial u}{\partial b} &=& \frac{\partial (a^{-1} v^{-n} b v^n)}{\partial b} \\
                              &=& a^{-1} \delta_{n-1}(v^{-1}) \frac{\partial v^{-1}}{\partial b} 
                                  + a^{-1} v^{-n} \left( 1 + b \delta_{n-1}(v) \frac{\partial v}{\partial b} \right)\\
                              &=& a^{-1} v^{-n} \left( 1 + (b-1) \delta_{n-1}(v) \frac{\partial v}{\partial b}\right),\\
\frac{\partial v}{\partial b} &=& \frac{\partial (a^{-1}b)^m(ab^{-1})^m}{\partial b} \\
                                   &=& \delta_{m-1}(a^{-1}b) a^{-1} +   (a^{-1}b)^m   \delta_{m-1}(ab^{-1})(-ab^{-1}).                  
\end{eqnarray*}

For $h\in G(L)$, we denote $\rho(h)$ by the capital letter $H$ for simplicity. 
With the orientation of $L=C(2m,2n,-2p)$ as in Figure~3, the genus is given by $g(L) = 1$. 
(This fact can be proved by applying Seifert's algorithm to the reduced alternating diagram 
of $L$ corresponding to the continued fraction $[2m,2n-1,1,2p-1]$.) 
Moreover, $\Phi(a) = t A$ and $\Phi(b) = t B$. Then $\Phi(v) = V$ and $\Phi(u) = U$. 

The highest degree term (in $t$) of $\Delta_{L, \rho}(t)$ is 
\begin{eqnarray*}
&& t^{0} \det \rho \Big( (b^{-1}a)^m \delta_{p-1}(u)  \times a^{-1} v^{-n} b \delta_{n-1}(v) \times  (a^{-1}b)^m   \delta_{m-1}(ab^{-1})(-ab^{-1}) \Big)\\
&=& t^0 \, \frac{T_p(\bar{u})-2}{\bar{u}-2} \cdot \frac{T_n(\bar{v})-2}{\bar{v}-2} \cdot \frac{T_m(z)-2}{z-2}
\end{eqnarray*}
by Lemma \ref{chev5}.
Similarly, the lowest one is $\disp{t^{-4} \, \frac{T_p(\bar{u})-2}{\bar{u}-2} \cdot \frac{T_n(\bar{v})-2}{\bar{v}-2} \cdot \frac{T_m(z)-2}{z-2}}$.

Consider a parabolic representation $\rho:G(L)\to SL(2,\C)$ 
given by 
$$A = \rho(a) = \begin{pmatrix}
1&1\\
0&1
\end{pmatrix} \quad \text{and} \quad 
B = \rho(b) = \begin{pmatrix}
1&0\\
2-z&1
\end{pmatrix},$$
where $z$ satisfies the matrix equation $AW = WA$. Here $W= \rho(w)$. 
Note that $z = \tr AB^{-1}$ holds.

Let $W_{ij} \in \C[z]$ denote the $(i,j)$-entry of $W$. 
Riley showed in \cite{Ri} that $W_{21} = (2-z) W'_{21}$ for some $W'_{21} \in \C[z]$ 
and that the matrix equation $AW = WA$ is equivalent to a single equation $W'_{21}=0$. 
We call $W_{21}'$ the \textit{Riley polynomial} of $L$. 

We now compute $W'_{21}$ for $L=C(2m,2n,-2p)$. 
Since $$A^{-1}B = \begin{pmatrix}
z-1 & -1\\
2-z & 1
\end{pmatrix} \qquad \text{and} \qquad AB^{-1} = \begin{pmatrix}
z-1 & 1\\
z-2 & 1
\end{pmatrix},$$ 
by Lemma \ref{chev5} we have
\begin{eqnarray*}
V  &=& (A^{-1}B)^m(AB^{-1})^m \\
&=& \begin{pmatrix}
S_m(z)-S_{m-1}(z) & -S_{m-1}(z)\\
(2-z)S_{m-1}(z) & S_m(z)-(z-1)S_{m-1}(z) 
\end{pmatrix} \\
&& \times
\begin{pmatrix}
S_m(z)-S_{m-1}(z) & S_{m-1}(z)\\
(z-2)S_{m-1}(z) & S_m(z)-(z-1)S_{m-1}(z) 
\end{pmatrix} \\
&=& \begin{pmatrix}
V_{11} & V_{12} \\
V_{21} & V_{22}
\end{pmatrix}
\end{eqnarray*}
where
\begin{eqnarray*}
V_{11} &=& S^2_m(z) - 2 S_m(z) S_{m-1}(z) + (3 - z) S^2_{m-1}(z), \\
V_{12} &=& (z-2) S^2_{m-1}(z), \\
V_{21} &=& -(z-2)^2 S^2_{m-1}(z), \\
V_{22} &=& S^2_m(z) + (2 - 2z) S_m(z) S_{m-1}(z) + (3-3z+z^2) S^2_{m-1}(z).
\end{eqnarray*}
Then
$$V^n = \begin{pmatrix}
S_n(\bar{v}) - V_{22} S_{n-1}(\bar{v}) & V_{12} S_{n-1}(\bar{v}) \\
V_{21} S_{n-1}(\bar{v}) & S_n(\bar{v}) - V_{11} S_{n-1}(\bar{v})
\end{pmatrix} = \begin{pmatrix}
\alpha & \beta \\
\gamma & \delta
\end{pmatrix}$$ 
where $\bar{v} = \tr V$.  Hence
\begin{eqnarray*}
U &=& \begin{pmatrix}
U_{11} & U_{12} \\
U_{21} & U_{22}
\end{pmatrix} = A^{-1} V^{-n} B V^n \\
&=& \begin{pmatrix}
1 & -1 \\
0 & 1
\end{pmatrix}
\begin{pmatrix}
\delta & - \beta \\
- \gamma & \alpha
\end{pmatrix} 
 \begin{pmatrix}
1 & 0 \\
2-z & 1
\end{pmatrix}  
\begin{pmatrix}
\alpha & \beta \\
\gamma & \delta
\end{pmatrix}
\\
&=& \begin{pmatrix}
1 + (z-2) \alpha (\alpha + \beta) & -1 + (z-2) \beta (\alpha + \beta) \\
-(z-2) \alpha^2 & 1 - (z-2) \alpha \beta
\end{pmatrix}.
\end{eqnarray*}
Since $W=(B^{-1}A)^m U^p$, we have
\begin{eqnarray*}
W &=& \begin{pmatrix}
S_m(z)-(z-1)S_{m-1}(z) & S_{m-1}(z)\\
(z-2)S_{m-1}(z) & S_m(z)-S_{m-1}(z) 
\end{pmatrix}
 \\
&& \times
\begin{pmatrix}
S_p(\bar{u}) - U_{22} S_{p-1}(\bar{u}) & U_{12} S_{p-1}(\bar{u}) \\
U_{21} S_{p-1}(\bar{u}) & S_p(\bar{u}) - U_{11} S_{p-1}(\bar{u})
\end{pmatrix}
\end{eqnarray*}
where $\bar{u} = \tr U$. 
This implies that
$$
W_{21} = U_{21}  \left( S_m(z)-S_{m-1}(z) \right) S_{p-1}(\bar{u}) 
+  (z-2)S_{m-1}(z) \left( S_p(\bar{u}) - U_{22} S_{p-1}(\bar{u}) \right).
$$
Since $U_{22} = 1 - (z-2) \alpha \beta$ and $U_{21} =  -(z-2) \alpha^2$ we have $$W_{21} = (2-z) W'_{21},$$ 
where the Riley polynomial of $L$ is given by 
\begin{eqnarray*}
W'_{21} &=& \alpha^2  \big( S_m(z)-S_{m-1}(z) \big) S_{p-1}(\bar{u}) \\
&& - \,  S_{m-1}(z) \Big( S_p(\bar{u}) + ( (z-2) \alpha \beta - 1) S_{p-1}(\bar{u}) \Big).
\end{eqnarray*}

Since the holonomy representation $\rho_0:G(L)\to SL(2,\C)$ 
is one of the parabolic representations, it has the form 
$$\rho(a) = \begin{pmatrix}
1&1\\
0&1
\end{pmatrix} \quad \text{and} \quad 
\rho(b) = \begin{pmatrix}
1&0\\
2-z_0&1
\end{pmatrix}$$
for some $z_0 \not\in \R$ satisfying $W'_{21}(z_0)=0$. 

To prove Theorem \ref{thm:main}, we determine all complex numbers $z$ satisfying both $W'_{21}(z)=0$ and 
$$\frac{T_p(\bar{u})-2}{\bar{u}-2} \cdot \frac{T_n(\bar{v})-2}{\bar{v}-2} \cdot \frac{T_m(z)-2}{z-2}=0.$$
We consider the following three cases.

\textbf{Case 1.} Suppose $\disp{\frac{T_m(z)-2}{z-2} = 0}$. Since $m$ is odd, by Lemma \ref{chev3} we have 
$\disp{z = 2 \cos \frac{2j\pi}{m}}$ for some $\disp{1 \le j \le \frac{m-1}{2}}$. 
Then, by Lemma \ref{chev1}, 
$$S_{m-1}(z)=\frac{\sin m \frac{2j\pi}{m}}{\sin \frac{2j\pi}{m}}=0, 
\quad \text{and} \quad S_m(z)=\frac{\sin (m+1)\frac{2j\pi}{m}}{\sin \frac{2j\pi}{m}}=1.$$
It is easy to see that $V = I$ and $$U = A^{-1} V^{-n} B V^n =A^{-1}B.$$ 
This implies that $W'_{21} = S_{p-1}(z)$. 

By Lemma \ref{gcd} we have $S_{\gcd(m,p)-1}(z) = \gcd(S_{m-1}(z), S_{p-1}(z))$. 

If $\gcd(m,p) = 1$, then $\gcd(S_{m-1}(z), S_{p-1}(z)) =1$. Since $S_{m-1}(z)  = 0$, we have
$W'_{21} = S_{p-1}(z) \not= 0$. 

If $\gcd(m,p) \ge 3$, we can choose $z'\in\R$ such that both $\disp{\frac{T_m(z')-2}{z'-2}}$ and $S_{p-1}(z')$ are zero. 
Indeed, choose some $\disp{1 \le j \le \frac{1}{2} \gcd(m,p)}$ and take $\disp{z'=2\cos \frac{2j \pi}{\gcd(m,p)}}$. 
Then the parabolic representation $\rho'$ corresponding to the root $z'$ of $W'_{21}$ does not detect 
the genus of $L$.

\textbf{Case 2.} Suppose $\disp{\frac{T_n(\bar{v})-2}{\bar{v}-2} = 0}$. Since $n$ is odd, 
we have $S_{n-1}(\bar{v})  = 0$ and $S_{n}(\bar{v})  = 1$. 
Then $$V^n = \begin{pmatrix}
S_n(\bar{v}) - V_{22} S_{n-1}(\bar{v}) & V_{12} S_{n-1}(\bar{v}) \\
V_{21} S_{n-1}(\bar{v}) & S_n(\bar{v}) - V_{11} S_{n-1}(\bar{v})
\end{pmatrix} = I$$
and $U=A^{-1} V^{-n} B V^n=A^{-1}B$. 
Hence, by Lemma \ref{chev1} we have $$W'_{21} = \big( S_m(z)-S_{m-1}(z) \big) S_{p-1}(z) - S_{m-1}(z) \big( S_p(z) -  S_{p-1}(z) \big) = S_{p-m-1}(z).$$

If $S_{p-m-1}(z) = 0$, then since $m \not= p$ we have $z \in \R$ (strictly between $-2$ and $2$) by Lemma \ref{chev2}. 
By a direct calculation and Lemma \ref{chev0} we have
\begin{eqnarray*}
\bar{v} = \tr V &=& 2 \Big( S^2_m(z) + S^2_{m-1}(z) - z S_m(z) S_{m-1}(z)\Big) + (z-2)^2 S^2_{m-1}(z) \\
&=& 2 + (z-2)^2 S^2_{m-1}(z)
\end{eqnarray*}
which is a real number greater than or equal to $2$. This contradicts $S_{n-1}(\bar{v})  = 0$.

\textbf{Case 3.}
Suppose $\disp{\frac{T_p(\bar{u})-2}{\bar{u}-2} = 0}$. Since $p$ is odd, we have $S_{p-1}(\bar{u}) = 0$ and $S_{p}(\bar{u}) = 1$. 
Then $W'_{21}  = -S_{m-1}(z)$. 

If $S_{m-1}(z) = 0$, then $S^2_m(z) = 1$ by Lemma \ref{chev0}. It is easy to see that
$V  = I$ and $$U = A^{-1} V^{-n} B V^n =A^{-1}B.$$ 
This implies that $\bar{u} = z$.
Hence $S_{p-1}(z) = S_{p-1}(\bar{u}) = 0$. 
It cannot occur that both $S_{m-1}(z) = 0$ and $S_{p-1}(z) = 0$ if $\gcd(m,p)=1$. 

From the above discussion we have proved Theorem \ref{thm:main} for the orientation in Figure~3. 

We now consider another orientation of $L$ which is obtained from the one in Figure~3 by changing the orientation of the component  corresponding to the meridian $a$. With this orientation we have $g(L) = m + p -1$. Moreover $\Phi(a) = t^{-1} A$ and $\Phi(b) = t B$. Then $\Phi(v) = V$ and $\Phi(u) = t^2 U$. 

In this case the highest degree term (in $t$) of $\Delta_{L, \rho}(t)$ is 
\begin{eqnarray*}
&& t^{4p-2} \det \left( (b^{-1}a)^m u^{p-1} \times a^{-1} v^{-n} b \delta_{n-1}(v) \times \delta_{m-1}(a^{-1}b) a^{-1} \right) \\
&=& t^{4p-2} \, \frac{T_{n-1}(\bar{v})-2}{\bar{v}-2}
\end{eqnarray*}
and the lowest one is $\disp{t^{2-4m} \, \frac{T_{n-1}(\bar{v})-2}{\bar{v}-2}}$. 
A similar argument as in Case 2 above shows that all parabolic representations detect the genus of $L$.

This completes the proof of Theorem~\ref{thm:main}.


\end{document}